\theoremstyle{plain}
\makeatletter\@namedef{subjclassname@2020}{\textup{2020} Mathematics Subject Classification}
\newtheorem{Thm}{Theorem}[section]
\newtheorem{Lem}[Thm]{Lemma}
\newtheorem{Cor}[Thm]{Corollary}
\newtheorem{Pro}[Thm]{Proposition}
\theoremstyle{definition}
\newtheorem{Def}[Thm]{Definition}
\newtheorem{Exm}[Thm]{Example}
\newtheorem{Prb}[Thm]{Problem}
\theoremstyle{remark}
\newtheorem{Rem}[Thm]{Remark}
\numberwithin{equation}{section}
\newcommand{\ITE}[3]{\ifthenelse{#1}{#2}{#3}}\newcommand{\ITEE}[4][]{\ITE{\equal{#2}{#3}}{#4}{#1}}
\newcommand{\myData}[1][]{
 \author[P.\ Niemiec]{Piotr Niemiec}
 \address{\ITEE{#1}{*}{P.\ Niemiec{}\\}
  Wydzia\l{} Matematyki i~Informatyki\\Uniwersytet Jagiello\'{n}ski\\
  ul.\ \L{}o\-ja\-sie\-wi\-cza 6\\30-348 Krak\'{o}w\\Poland}
 \email{piotr.niemiec@uj.edu.pl}
 }
\newenvironment{cor}[2][]{\ITEE[{\begin{Cor}[#1]}]{#1}{}{\begin{Cor}}\label{cor:#2}}{\end{Cor}}
\newenvironment{dfn}[2][]{\ITEE[{\begin{Def}[#1]}]{#1}{}{\begin{Def}}\label{def:#2}}{\end{Def}}
\newenvironment{exm}[2][]{\ITEE[{\begin{Exm}[#1]}]{#1}{}{\begin{Exm}}\label{exm:#2}}{\end{Exm}}
\newenvironment{lem}[2][]{\ITEE[{\begin{Lem}[#1]}]{#1}{}{\begin{Lem}}\label{lem:#2}}{\end{Lem}}
\newenvironment{prb}[2][]{\ITEE[{\begin{Prb}[#1]}]{#1}{}{\begin{Prb}}\label{prb:#2}}{\end{Prb}}
\newenvironment{pro}[2][]{\ITEE[{\begin{Pro}[#1]}]{#1}{}{\begin{Pro}}\label{pro:#2}}{\end{Pro}}
\newenvironment{rem}[2][]{\ITEE[{\begin{Rem}[#1]}]{#1}{}{\begin{Rem}}\label{rem:#2}}{\end{Rem}}
\newenvironment{thm}[2][]{\ITEE[{\begin{Thm}[#1]}]{#1}{}{\begin{Thm}}\label{thm:#2}}{\end{Thm}}
\newcommand{\COR}[2][!]{\ITEE{#1}{!}{Corollary~}\ITEE{#1}{s}{Corollaries~}\textup{\ref{cor:#2}}}
\newcommand{\DEF}[2][!]{\ITEE{#1}{!}{Definition~}\ITEE{#1}{s}{Definitions~}\textup{\ref{def:#2}}}
\newcommand{\EXM}[2][!]{\ITEE{#1}{!}{Example~}\ITEE{#1}{s}{Examples~}\textup{\ref{exm:#2}}}
\newcommand{\LEM}[2][!]{\ITEE{#1}{!}{Lemma~}\ITEE{#1}{s}{Lemmas~}\textup{\ref{lem:#2}}}
\newcommand{\PRO}[2][!]{\ITEE{#1}{!}{Proposition~}\ITEE{#1}{s}{Propositions~}\textup{\ref{pro:#2}}}
\newcommand{\REM}[2][!]{\ITEE{#1}{!}{Remark~}\ITEE{#1}{s}{Remarks~}\textup{\ref{rem:#2}}}
\newcommand{\THM}[2][!]{\ITEE{#1}{!}{Theorem~}\ITEE{#1}{s}{Theorems~}\textup{\ref{thm:#2}}}
\newcommand{\CCC}{\mathbb{C}}
\newcommand{\KKK}{\mathbb{K}}
\newcommand{\RRR}{\mathbb{R}}
\newcommand{\TTT}{\mathbb{T}}
\newcommand{\ZZZ}{\mathbb{Z}}
\newcommand{\PpP}{\EuScript{P}}
\newcommand{\QqQ}{\EuScript{Q}}
\newcommand{\eE}{\mathfrak{e}}
\newcommand{\dd}{\colon}
\newcommand{\df}{\stackrel{\textup{def}}{=}}
\newcommand{\epsi}{\varepsilon}
\newcommand{\varempty}{\varnothing}
\newcommand{\OPN}[1]{\operatorname{#1}}
\newcommand{\dist}{\operatorname{dist}}
\newcommand{\RE}{\operatorname{Re}}
\newcommand{\UP}[1]{\textmd{\textup{#1}}}
\newcommand{\tfcae}{the following conditions are equivalent:}
\newcommand{\MBOX}[1]{\mbox{\(#1\)}}
\newcommand{\HOM}[2]{\MBOX{\OPN{Hom}_c(#1,#2)}}
\newcommand{\NORM}[1][]{value#1}
\newcommand{\reg}[1]{#1_{\UP{reg}}}
\newcommand{\REG}[1][Q]{\OPN{R#1V}}
\newcommand{\DUAL}[1][m]{#1etric-dual}
\newcommand{\SECD}[1]{\mbox{\(\widehat{\mbox{\(\widehat{#1}\)}}\)}}
\newcommand{\fin}{\OPN{bd}}
\newcommand{\zeroqv}[1]{\OPN{zero}_{#1}}
\newcommand{\inftyqv}[1]{\OPN{infty}_{#1}}
\begin{document}

\title{Metric duality for Abelian groups}
\myData\thanks{Research supported by the National Center of Science, Poland
 under the Weave-UNISONO call in the Weave programme [grant no
 2021/03/Y/ST1/00072].}
\begin{abstract}
The main aim of the paper is to introduce the concept of metric duality in
the category of topological Abelian groups that extends the classical notion of
duality for normed vector spaces and behaves quite nicely for LCA groups
(equipped with \textit{nice} metrics). In particular, it is shown that each
Polish LCA group admits a reflexive proper metric and, more generally, all LCA
groups possess reflexive (proper) metric structures.
\end{abstract}
\subjclass[2020]{Primary 22D35; Secondary 46B10, 43A40}
\keywords{Pontryagin's duality; LCA group; invariant metric; dual group;
 reflexive topological Abelian group; reflexive Banach space.}
\maketitle


\section{Introduction}

Duality appears quite naturally in categories of locally compact Abelian groups
and of Banach spaces. It is not only an interesting notion but also an important
tool. Although in both these realms reflexitivy is defined in a quite similar
way, due to our best knowledge so far no efforts have been made to explain that
these two different concepts (of both duality and reflexivity) actually arise
from a more general notion that leads to the Pontryagin's dual group for LCA
groups and to the dual Banach space for (real or complex) normed vector spaces.
In this paper we propose such a concept and present most fundamental properties
of this new notion. We call it \emph{metric duality} to distinguish it from
a classical duality for topological Abelian groups (with a special emphasis on
almost periodic groups; the reader interested in this topic is referred to,
e.g., \cite{cdm}) which we call \emph{topological duality}. The main differences
between these two ideas are the following:
\begin{itemize}
\item the metric dual of a metric Abelian group is metric as well, which, in
 general, fails to hold for the topological dual;
\item all Banach spaces are topologically reflexive (as topological groups)
 \cite{smi}, whereas the metric dual of a normed vector space is naturally
 isometrically isomorphic to its ordinary dual Banach space;
\item the topology of the metric dual \textbf{does} depend on the compatible
 metric on a given metrisable group~--- in particular, the metric dual of
 a Polish metric LCA group topologically coincides with its Pontryagin dual
 group iff its metric is proper (see \THM{comp-dual} in Section~3 below).
\end{itemize}
We consider our concept of metric duality as a natural generalisation of duality
in Banach spaces (not in LCA groups). However, in a fully general context (that
is, when dealing with metric Abelian groups other than normed vector spaces)
a new phenomenon appears (that is not present in normed vector spaces):
the canonical homomorphism from a given group into its second dual group may not
be isometric (even in case it is a topological embedding). This strange
behaviour leads to the notions of \emph{regularity} and \emph{semireflexivity}
of metrics.\par
It is also worth underlying that, in general, although the metric dual of
a Banach space \(E\) naturally `coincides' with the ordinary dual Banach space
of \(E\), the weak topologies on \(E\) induced by these two duals differ (simply
because the weak topology of the LCA group \((\RRR,+)\) does not coincide with
its natural topology).\par
The paper is organised as follows. In Section~2 we define \NORM[s] on Abelian
groups (they are single variable functions that naturally correspond to
invariant metrics) and their dual \NORM[s] (defined on dual groups). We
distinguish regular, semiregular, reflexive and semireflexive \NORM[s]. In
\THM{NVS} we show that the metric dual group of a normed vector space is
naturally isometrically isomorphic to the ordinary dual Banach space. In
particular, a Banach space is reflexive as a metric group iff it is so as
a Banach space. In the next section we deal with Polish LCA groups and prove (in
\THM{comp-dual}) that the dual \NORM{} of a compatible \NORM{} \(p\) on such
a group \(G\) is compatible with the Pontryagin's topology of the dual group iff
\(p\) is proper. As a consequence, we show (in \PRO{1-1LCA}) that there is
a natural one-to-one correspondence between reflexive proper \NORM[s] on \(G\)
and such \NORM[s] on the dual group of \(G\). In the fourth part we establish
fundamental properties of reflexive metric Abelian groups. In particular, in
\THM{main-refl} we show that two reflexive equivalent \NORM[s] on an Abelian
group have equivalent dual \NORM[s] and that metrically reflexive groups are
topologically reflexive. It is also shown that the metric dual group of
a reflexive Polish metric group is Polish as well. Section~5 is devoted to
a generalisation of the notions introduced in the second part to the context of
(so-called) quasi-\NORM[s] on Abelian groups. This material may be considered
preparatory to introducing the concept of metric duality for non-metrisable
topological Abelian groups (e.g., for all LCA groups). We establish there basic
properties of regular quasi-\NORM[s] (among other things, we show in
\THM{sup-reg} that all such quasi-\NORM[s] on a given Abelian group form
a complete lattice; and in \THM{sub-reg} that the restriction of a regular
quasi-\NORM{} to an arbitrary subgroup is regular as well). In Section~6 we
define metric structures on both abstract and topological Abelian groups and
introduce and study (metric) duality and reflexivity for groups with such
structures. We conclude this part by proving (in \THM{LCA-metr-str}) that each
LCA group admits a reflexive proper metric structure. In the final, seventh part
we pose some problems related to the topics discussed in earlier parts.

\subsection{Notation and terminology}
In this paper \(\RRR_+ = [0,\infty)\) and \(\TTT = \{z \in \CCC\dd\ |z| = 1\}\).
The (multiplicative) circle group \(\TTT\) shall be sometimes identified with
the additive group \(\RRR / \ZZZ\).\par
All topological groups are Hausdorff, unless otherwise stated. The neutral
element of a group \(G\) is denoted by \(e_G\). A \textit{\UP[semi\UP]\NORM} on
\((G,\cdot)\) is a function \(p\dd G \to \RRR_+\) such that the formula
\(G \times G \ni (x,y) \mapsto p(x^{-1} y) \in \RRR_+\) defines a [pseudo]metric
on \(G\). (More often authors use the term a \textit{norm} instead of a \NORM,
but in this paper we will also deal with normed vector spaces and therefore have
chosen the latter term to avoid confusions.) By a \textit{metric} group we mean
a triple \((G,\cdot,p)\) where \((G,\cdot)\) is a metrisable topological group
and \(p\dd G \to \RRR_+\) is a compatible \NORM{} on \(G\). (In this paper
almost all groups under consideration are Abelian.) \textit{LCA} is
the abbreviation of \textit{locally compact Abelian}. For a LCA group \(G\),
the compact-open topology on its dual group (that is, on the group of all
continuous homomorphisms from \(G\) into \(\TTT\)) is called by us
\emph{Pontryagin's topology}.\par
For any semi\NORM{} \(p\) on a group \(G\) and a positive real number \(r\) we
use \(B_p(r)\) and \(\bar{B}_p(r)\) to denote the open and closed \(p\)-balls
around \(e_G\); that is, \(B_p(r)\) [resp. \(\bar{B}_p(r)\)] consists of all
\(g \in G\) such that \(p(x) < r\) [resp. \(p(x) \leq r\)].\par
For two topological groups \(G\) and \(H\), \HOM{G}{H} stands for the set of all
continuous homomorphisms from \(G\) into \(H\). When, in addition, \(H\) is
Abelian, \HOM{G}{H} becomes a group with (natural) pointwise binary action. We
underline that, unless otherwise stated, \HOM{G}{H} is equipped with no
topology.

\section{\DUAL[M] group}

Before passing to metric LCA groups, first we discuss the concept of metric
duality in a more general context. The lemma below is a key tool of the whole
paper (and a counterpart of the analogous result for linear operators between
normed vector spaces). It is likely it is known. However, we did not find it
in the existing literature.

\begin{lem}{key}
Let \(q\) be a \NORM{} on \((\TTT,\cdot)\) such that
\begin{equation}\label{eqn:key}
\frac{|z-1|}{M} \leq q(z) \leq M |z-1|
\end{equation}
for all \(z \in \TTT\) and some constant \(M > 1\). For any metric group
\((G,\cdot,p)\) and a homomorphism \(\phi\dd G \to \TTT\) \tfcae
\begin{enumerate}[\upshape(i)]
\item \(\phi\) is continuous;
\item there exists a constant \(c \geq 0\) such that
 \begin{equation}\label{eqn:Lip}
 q(\phi(x)) \leq c p(x) \qquad (x \in G).
 \end{equation}
\end{enumerate}
\end{lem}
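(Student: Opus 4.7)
The plan is to dispatch (ii)$\Rightarrow$(i) immediately from the lower bound in \eqref{eqn:key}, and to prove the forward implication (i)$\Rightarrow$(ii) by a power-iteration argument in $\TTT$.

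For (ii)$\Rightarrow$(i): substituting the hypothesis $q(\phi(x)) \leq cp(x)$ into the left-hand inequality of \eqref{eqn:key} gives $|\phi(x)-1| \leq Mcp(x)$, so $\phi$ is continuous at $e_G$, and hence continuous on $G$.

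For (i)$\Rightarrow$(ii), I would fix a small constant $\eta \in (0,1/2)$ and, using continuity of $\phi$ at $e_G$, pick $\delta > 0$ with $p(x) < \delta$ implying $|\phi(x)-1| < \eta$. Then for each $x \in G$ with $0 < p(x) \leq \delta/2$, set $n = \lfloor \delta/p(x) \rfloor$, which satisfies $n \geq \delta/(2p(x))$. Subadditivity of $p$ yields $p(x^k) \leq kp(x) \leq \delta$ for every $k \leq n$, so the first $n$ powers $\phi(x), \phi(x)^2, \dots, \phi(x)^n$ all lie in the short arc $A = \{z \in \TTT : |z-1| < \eta\}$.

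The hard part will be a purely geometric claim on $\TTT$: if $z = e^{i\alpha}$ with $\alpha \in (-\pi,\pi]$ and $z^k \in A$ for $k = 1,\dots,n$, then $|\alpha| \leq \theta_0/n$, where $\theta_0 = 2\arcsin(\eta/2)$. I would rule out any wrap-around, namely show that $|k\alpha| \leq \pi$ for all $k \leq n$: at the smallest offending $k$ (if one exists), the principal value of $k\alpha$ modulo $2\pi$ would have magnitude at least $\pi - \theta_0 > \theta_0$ (since $\eta \leq 1/2$ forces $\theta_0 < \pi/2$), placing $z^k$ outside $A$ and yielding a contradiction. Once no wrap-around occurs, the condition $z^n \in A$ reduces to $|n\alpha| < \theta_0$, proving the claim. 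Feeding this back together with $q(z) \leq M|z-1| \leq M|\alpha|$ from \eqref{eqn:key} and the estimate $n \geq \delta/(2p(x))$ produces $q(\phi(x)) \leq (2M\theta_0/\delta)\,p(x)$ for $p(x) \leq \delta/2$; the remaining range $p(x) > \delta/2$ is absorbed by the trivial bound $q(\phi(x)) \leq 2M \leq (4M/\delta)\,p(x)$, and taking the larger of the two constants delivers the $c$ required in \eqref{eqn:Lip}.
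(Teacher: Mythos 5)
Your proof is correct and takes essentially the same route as the paper's: the trivial bound \(q(\phi(x))\leq 2M\) for \(p(x)\) large, and for \(p(x)\) small the power-iteration argument (all powers \(x^k\), \(k\leq n\), stay in a small ball, so the \(\phi(x)^k\) stay in a short arc and cannot wrap around \(\TTT\), forcing \(\phi(x)\) to be within \(O(1/n)\) of \(1\)); your explicit angle computation is precisely the content of the property \((\star)\) that the paper extracts from \eqref{eqn:key}. The only cosmetic point is that the bound \(p(x^k)\leq kp(x)\leq\delta\) should be made strict (e.g.\ by shrinking \(\delta\)) before invoking the continuity hypothesis \(p(y)<\delta\Rightarrow|\phi(y)-1|<\eta\).
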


It is worth noting here that condition \eqref{eqn:key} is equivalent to
the statement that \(q\) induces a Gleason metric on the Lie group \(\TTT\) (for
the definition and more on Gleason metrics the reader is referred to Chapter~1
of \cite{tao}, with a special focus on Defintion~1.3.8 and Theorem~1.5.5
therein). The proof below shows that the conclusion of \LEM{key} remains valid
if \(\TTT\) equipped with \(q\) is replaced by a (possibly non-Abelian) Lie
group equipped with a Gleason metric.

\begin{proof}[Proof of \LEM{key}]
We only need to show that (ii) follows from (i). To this end, first note that it
follows from \eqref{eqn:key} that there exists a constant \(\epsi \in (0,1)\)
such that
\begin{itemize}
\item[(\(\star\))] If \(z \in \TTT\) and \(n > 0\) are such that \(q(z^j) \leq
 \epsi\) for \(j=1,\ldots,n\), then \(q(z^n) \geq \epsi n q(z)\).
\end{itemize}
Now take a positive real number \(r > 0\) such that \(\phi(B_p(r)) \subset
B_q(\epsi)\). We will show that \eqref{eqn:Lip} holds for \(c \df
\frac{4M}{\epsi r}\). So, fix \(x \in G\). We may and do assume that \(\phi(x)
\neq 1\). If \(x \notin B_p(r)\), then \(q(\phi(x)) \leq M |\phi(x)-1| \leq 2M
\leq \frac{2M}{r} p(x)\). On the other hand, if \(p(x) < r\), take the greatest
integer \(N > 0\) such that \(N p(x) < r\). Then \(p(x) \geq \frac{r}{2N}\),
\(q(\phi(x)^j) \leq \epsi\) for \(j=1,\ldots,N\) (because \(x^j \in B_p(r)\))
and hence, thanks to (\(\star\)), \(q(\phi(x)) \leq \frac{q(\phi(x)^N)}{N \epsi}
\leq \frac{2M}{N\epsi} \leq \frac{4M}{r\epsi} p(x)\) and we are done.
\end{proof}

In the sequel we will define the dual \NORM{} (to a given one on a group \(G\))
as a function that assigns to a character \(\phi \in \HOM{G}{\TTT}\) the least
constant \(c\) in \eqref{eqn:Lip}. But first we should choose a certain `nice'
\NORM{} on \(\TTT\) that will be fixed in the whole theory. Actually, each
\NORM{} on \(\TTT\) that satisfies \eqref{eqn:key} may serve as such a \NORM,
but then not all results presented in this paper remain valid (e.g., explicit
formulas for the dual \NORM{} or the regularisation of a given \NORM{} can
become incorrect). To justify our choice, we introduce the following (well-known
and quite easy)

\begin{pro}{great-circ}
Let \(\lambda\) denote the quotient \NORM{} on \(\TTT \equiv \RRR / \ZZZ\)
induced by the natural one on \(\RRR\); that is:
\begin{equation}\label{eqn:lambda}
\lambda(z) = \min \{|t|\dd\ t \in \RRR,\ e^{2\pi ti} = z\}.
\end{equation}
For a compatible \NORM{} \(p\) on \(\TTT\) \tfcae
\begin{itemize}
\item for any \(z \in \TTT \setminus \{1\}\) there exists an isometric path
 from \([0,p(z)]\) into \((\TTT,p)\) that joins \(1\) with \(z\);
\item \(p = c \lambda\) for some constant \(c > 0\).
\end{itemize}
\end{pro}

We leave the proof of \PRO{great-circ} to the reader. The conclusion of this
result lets us call \NORM[s] of the form \(\lambda_c \df c \lambda\)
\textit{canonical}. (Note that the diameter of the space \((\TTT,\lambda_c)\) is
\(\frac{c}{2}\).)\par
\textbf{From now on, the group \boldmath{\(\TTT\)} is equipped with the \NORM{}
\boldmath{\(\lambda_c\)}} where \(c > 0\) is some fixed constant. (We will set
\(c\) as 1 after \PRO{reals}.) Note that \(\lambda_c\) satisfies
\eqref{eqn:key}.

\begin{dfn}{dual-value}
\emph{Dual \NORM} to a \NORM{} \(p\) on an Abelian group \((G,+)\) is a function
\(p'\dd \HOM{G}{\TTT} \to \RRR_+\) (where \(G\) is topologised by \(p\)) given
by the formula:
\[p'(\chi) \df \inf \{M > 0\dd\ \lambda_c(\chi(x)) \leq M p(x)\ (x \in G)\}
\qquad (\chi \in \HOM{G}{\TTT}).\]
The triple \((G',\cdot,p')\) where \(G' = \HOM{G}{\TTT}\) is called
the \emph{\DUAL{} group} of \((G,+,p)\). In other words, \((G,+,p)' \df
(G',\cdot,p')\).
\end{dfn}

To complete the above definition, we recall that every Abelian group becomes
topological when equipped with a \NORM{} (or, more precisely, with an invariant
metric induced by a \NORM). It can also easily be checked that the dual \NORM{}
of any \NORM{} is again a \NORM{}. In this way we can define inductively \DUAL{}
groups of higher order by \((G,+,p)^{(n)} = (G^{(n)},\cdot,p^{(n)}) \df
(G',\cdot,p')^{(n-1)}\).\par
It follows from the definition of a dual \NORM{} that for any \NORM{} \(p\) on
an Abelian group \(G\):
\[\lambda_c(\chi(x)) \leq p'(\chi) p(x) \qquad (x \in G,\ \chi \in G').\]
Here, at the very preliminary stage of our considerations, we underline that, in
general, the topology of the \DUAL{} group of a metric LCA group may differ
from the Pontryagin's topology of the dual group. However, both these dual
groups always coincide as sets with binary actions. A complete characterisation
of \NORM[s] on metrisable LCA groups whose dual \NORM[s] are compatible with
the Pontryagin's topology of the dual group will be given in \THM{comp-dual}
below.

\begin{exm}{T-Z}
Considering (in a natural and standard manner) the topological group \(\ZZZ\) as
the Pontryagin's dual of \(\TTT\) and vice versa, it can be easily checked that
the \DUAL{} group of \((\TTT,\cdot,\lambda_c)\) is \((\ZZZ,+,m)\) where \(m(k) =
|k|\). Conversely, the \DUAL{} group of \((\ZZZ,+,m)\) is
\((\TTT,\cdot,\lambda_c)\). The details are left to the reader.
\end{exm}

To fix uniquely the \NORM{} on \(\TTT\), we establish the following easy
property.

\begin{pro}{reals}
Let the quotient epimorphism of \(\RRR\) onto \(\RRR / \ZZZ \equiv \TTT\) be
realised as \(p\dd \RRR \ni t \mapsto e^{2\pi ti} \in \TTT\). For any \(t \in
\RRR\) let \(\phi_t \in \RRR'\) be given by \(\phi_t(x) = p(tx)\). (In this way
the assignment \(\RRR \ni t \mapsto \phi_t \in \RRR'\) defines an isomorphism of
topological groups.)\par
For any compatible \NORM{} \(\rho\) on \(\RRR\) let \(\rho^*\) be the \NORM{} on
\(\RRR\) corresponding to \(\rho'\) via the above isomorphism; that is,
\(\rho^*(t) = \rho'(\phi_t)\). Then, for \(\tau(t) \df |t|\):
\[\tau^* = c \tau.\]
\end{pro}
\begin{proof}
It follows from \eqref{eqn:lambda} that \(\lambda_c(\phi_t(x)) \leq c |tx|\)
and hence \(\tau^*(t) \leq c |t|\). On the other hand, when \(t\) is fixed and
\(x \neq 0\) is very close to \(0\), then \(\lambda_c(\phi_t(x)) = c |tx|\),
which yields the reverse inequality and finishes the proof.
\end{proof}

Taking into account the above result, it is quite natural to require
``axiomatically'' that the natural \NORM{} on the group \(\RRR\) is
\textit{self-dual} (after identifying this group with its dual via isomorphism
described in the above proposition). Hence we set \(c = 1\). (Note that this is
the case when the metric length of \(\TTT\) is 1, which best fits to
the probabilistic Haar measure on \(\TTT\).)\par
\textbf{From now on to the end of the paper, the group \boldmath{\(\TTT\)} is
equipped with the \NORM{} \boldmath{\(\lambda\)}} (given by
\eqref{eqn:lambda}).\par
As it is done for Banach spaces as well as for LCA groups, for any \NORM{} \(p\)
on an Abelian group \((G,+)\) we have a \emph{canonical} homomorphism
\(\kappa_G\dd (G,+,p) \ni x \to \eE_x \in (G,+,p)''\):
\begin{equation}\label{eqn:bidual}
\eE_x(\phi) = \phi(x) \qquad (x \in G,\ \phi \in G').
\end{equation}
Observe that \(p''(\eE_x) \leq p(x)\) and therefore \(\kappa_G\) is continuous
(even non-expansive). However, it is one-to-one iff \(G\) is maximally almost
periodic (cf. \cite{cdm}); that is, if \(G'\) separates the points of \(G\). In
particular, \(\kappa_G\) may be non-isometric. Taking these remarks into
account, we introduce

\begin{dfn}{reg-refl}
A \NORM{} \(p\) on an Abelian group \((G,+)\) (as well as the metric group
\((G,+,p)\)) is said to be
\begin{itemize}
\item \emph{regular} if \(\kappa_G\) is isometric;
\item \emph{semiregular} if \(\kappa_G\) is a topological embedding;
\item \emph{reflexive} if \(\kappa_G\) is both isometric and surjective;
\item \emph{semireflexive} if \(\kappa_G\) is a topological isomorphism.
\end{itemize}
\emph{Regularisation} of \(p\) is a semi\NORM{} \(\reg{p}\) on \(G\) defined by
\(\reg{p}(x) \df p''(\eE_x)\ (\leq p(x))\). (In particular, \(p = \reg{p}\) iff
\(p\) is regular; and \(p\) is semiregular iff \(\reg{p}\) is a \NORM{}
equivalent to \(p\).) We use \(\REG[](G)\) to denote the collection of all
regular \NORM[s] on \(G\).
\end{dfn}

\begin{pro}{regular}
Let \((G,+,p)\) be a metric Abelian group. Then:
\begin{enumerate}[\upshape(a)]
\item for any \(x \in G\),
 \[\reg{p}(x) = \sup_{\chi \in G' \setminus \{1\}}
 \left(\inf_{g \notin \ker(\chi)}
 \frac{\lambda(\chi(x))}{\lambda(\chi(g))} p(g)\right)\]
 (where \(\sup(\varempty) = 0\));
\item \(p^{(n)} \in \REG[](G^{(n)})\) for all \(n > 1\);
\item if \(p\) is semiregular, then \(\reg{p} \in \REG[](G)\) and \((\reg{p})' =
 p'\).
\end{enumerate}
\end{pro}

It is worth noting here that in Section~\ref{sec:quasi} we will extend
the notions of regularity and regularisations to (so-called) quasi-\NORM[s] and
prove that the regularisation is always regular (so, the conclusion of item (c)
above holds without any additional assumptions on \(p\)).

\begin{proof}[Proof of \PRO{regular}]
Part (a) is left to the reader. We turn to (b). For transparency, set \(q =
p'\). We have \(\lambda(\xi(\chi)) \leq q(\chi) q'(\xi)\) for all \(\chi \in
G'\) and \(\xi \in G''\). Substituting \(\xi = \eE_x\) (with \(x \in G\)), we
get
\begin{equation}\label{eqn:aux2}
\lambda(\chi(x)) \leq \reg{p}(x) p'(\chi).
\end{equation}
Applying the last inequality to \(q\) instead of \(p\) yields
\(\lambda(\xi(\chi)) \leq \reg{q}(\chi) q'(\xi)\) for all \(\chi \in G'\) and
\(\xi \in G''\). Again, substituting \(\xi = \eE_x\), we obtain
\(\lambda(\chi(x)) \leq \reg{p}(x) \reg{q}(\chi) \leq p(x) \reg{q}(\chi)\).
Since \(q(\chi)\) is the least Lipschitz constant of \(\chi\), we conclude that
\(q(\chi) \leq \reg{q}(\chi)\). So, \(q = \reg{q}\) and an easy induction
argument finishes the proof of (b).\par
Finally, to show (c), denote \(\rho = \reg{p}\). Since \(\rho \leq p\), we get
\(p' \leq \rho'\) (here we use the property that \(\rho\) is equivalent to
\(p\)). On the other hand, \eqref{eqn:aux2} implies that \(\rho' \leq p'\) and,
consequently, \(\rho' = p'\) and \(\rho'' = p''\), which is followed by (c).
\end{proof}

The above result shows that for any \NORM{} \(p\) on an Abelian group
\((G,\cdot)\), the sequence \((G',\cdot,p')\), \((G'',\cdot,p'')\), \ldots
behaves similarly as the classical sequence of consecutive dual spaces of
a Banach space in the sense that each of these groups embeds isometrically into
its second dual in a canonical way. We leave it as a warm-up exercise to give
an example of a non-regular metric Abelian group. That this cannot happen for
normed vector spaces is shown in the following

\begin{thm}{NVS}
A norm on a real or complex vector space is a regular \NORM. Moreover, for any
normed vector space \((E,\|\cdot\|)\) over the field \(\KKK \in \{\RRR,\CCC\}\)
the assignment
\begin{equation}\label{eqn:aux3}
E^* \ni u \mapsto p \circ (\RE u) \in E'
\end{equation}
defines an isometric group isomorphism from the dual Banach space \(E^*\) of
\(E\) onto the \DUAL{} group \(E'\) of \(E\) where \(p\dd \RRR \to \TTT\) is
given by \(p(t) = e^{2\pi ti}\).\par
In particular, \(E\) is a reflexive metric group iff it is a reflexive Banach
space.
\end{thm}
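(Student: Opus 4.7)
The plan is to prove the isometric isomorphism claim first; regularity and the reflexivity equivalence will then follow by applying the same identification to the dual Banach space. Denote the assignment in \eqref{eqn:aux3} by \(\Phi\dd E^* \to E'\). That \(\Phi\) is a group homomorphism is immediate from \(e^{2\pi i\RE(u+v)(x)} = e^{2\pi i\RE u(x)}e^{2\pi i\RE v(x)}\), and each \(\Phi(u)\) is continuous because \(\RE u\) is.

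For surjectivity of \(\Phi\), let \(\chi \in E'\). \LEM{key} gives \(\lambda(\chi(x)) \leq c\|x\|\) for some \(c\geq 0\); in particular \(\chi\) is continuous. Since \(E\) is contractible, \(\chi\) lifts uniquely through \(p\) to a continuous \(\tilde\chi\dd E \to \RRR\) with \(\tilde\chi(0)=0\); the uniqueness of lifts forces \(\tilde\chi\) to be additive, and a continuous additive map on a normed \(\RRR\)-space is \(\RRR\)-linear. When \(\KKK=\RRR\), \(\tilde\chi \in E^*\) and \(\Phi(\tilde\chi)=\chi\). When \(\KKK=\CCC\), the formula \(u(x) \df \tilde\chi(x) - i\tilde\chi(ix)\) defines a \(\CCC\)-linear \(u\in E^*\) with \(\RE u = \tilde\chi\), so \(\Phi(u)=\chi\). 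Injectivity follows because \(\Phi(u)=1\) forces \(\RE u(E) \subseteq \ZZZ\), hence \(\RE u = 0\) by linearity, hence \(u=0\).

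Next I would verify that \(p'(\Phi(u)) = \|u\|\). From \(\lambda(e^{2\pi it}) \leq |t|\) for every \(t\in\RRR\),
\[\lambda(\Phi(u)(x)) \leq |\RE u(x)| \leq \|\RE u\|\cdot\|x\| \qquad (x \in E),\]
hence \(p'(\Phi(u)) \leq \|\RE u\|\). For the reverse inequality, given \(\epsi>0\) choose \(x\) with \(\|x\|=1\) and \(\RE u(x) > \|\RE u\|-\epsi\); for small \(t>0\) one has \(|t\RE u(x)|\leq 1/2\), so \(\lambda(\Phi(u)(tx)) = t\RE u(x)\), which after dividing by \(\|tx\|=t\) gives \(p'(\Phi(u)) \geq \RE u(x)\). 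Letting \(\epsi\to 0\) and using the classical Hahn--Banach identity \(\|\RE u\|=\|u\|\) (trivial for \(\KKK=\RRR\)) yields the isometry.

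Finally, applying the just-established isomorphism to the Banach space \(E^*\) produces an isometric identification \((E^*)^* \to E''\), and a direct calculation shows that under the resulting chain \(E \to E'' \leftrightarrow E^{**}\) the canonical homomorphism \(\kappa_E\) corresponds to the standard evaluation embedding \(x \mapsto (u \mapsto u(x))\). This embedding is isometric by Hahn--Banach, so \(\|\cdot\|\) is regular; it is surjective precisely when \(E\) is a reflexive Banach space, which gives the final clause. I expect the main technical obstacle to be the global lifting step: one must guarantee that a continuous character on all of \(E\) admits a continuous additive lift to \(\RRR\). This is where the contractibility of a normed space is essential, and it is precisely what distinguishes this case from general metric Abelian groups (where \(\kappa_G\) can fail to be isometric).
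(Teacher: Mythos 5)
Your proposal is correct and follows essentially the same route as the paper's proof: lift each character through \(p\colon\RRR\to\TTT\) to a continuous \(\RRR\)-linear functional (the paper cites this as well-known via simple connectedness, you spell out the covering-space argument), match the Lipschitz constants of \(\phi\) and \(p\circ\phi\) by the same small-\(t\) computation, and then transfer the isomorphism to \(E^*\) to identify \(\kappa_E\) with the Banach-space evaluation \(J_E\). The only cosmetic slip is the invocation of \LEM{key} to conclude that \(\chi\) "is continuous" --- continuity is part of the definition of \(E'\) and is what the lifting actually needs; the Lipschitz bound plays no role at that step.
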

\begin{proof}
It is well-known that (since \(E\) is a simply connected topological group) for
any character \(\chi \in E'\) there exists a unique continuous \(\RRR\)-linear
functional \(\phi\dd E \to \RRR\) such that \(p \circ \phi = \chi\). If \(\KKK =
\CCC\), any continuous \(\RRR\)-linear functional \(\phi\dd E \to \RRR\) extends
to a unique \(\psi \in E^*\) such that \(\|\psi\| = \|\phi\|\). So, to show that
\eqref{eqn:aux3} is isometric and surjective, we only need to verify that
the Lipschitz constants of \(\phi\) and \(p \circ \phi\) coincide for any
\(\RRR\)-linear functional \(\phi\dd E \to \RRR\). But the last property easily
follows from the fact that \(p\) is non-expansive (w.r.t. \(\lambda\) and
the natural \NORM{} on \(\RRR\)) and \(p(\phi(tx)) = |\phi(tx)|\) for any \(x
\in E\) and sufficiently small \(t > 0\).\par
For simplicity, denote by \(\Lambda_E\) the isomorphism given by
\eqref{eqn:aux3}, and by \(J_E\dd E \to E^{**}\) the canonical embedding of
\(E\) into its second dual Banach space. Observe that then \(\Lambda_{E^*}\dd
E^{**} \to (E^*)'\) and \(\Phi\dd E^{**} \ni \xi \mapsto \Lambda_{E^*}(\xi)
\circ \Lambda_E^{-1} \in E''\) is a well defined isometric group isomorphism
such that \(\kappa_E = \Phi \circ J_E\). Therefore \(\kappa_E\) is isometric,
and \(E\) is reflexive as a normed vector space iff it is so as a metric group.
\end{proof}

The above result shows that the concept of metric duality introduced in
\DEF{dual-value} naturally extends duality for normed vector spaces.

\begin{exm}{discrete}
Consider the discrete \NORM{} \(\delta_G\) on an Abelian group \((G,\cdot)\)
(that is, \(\delta_G(g) = 1\) for \(g \neq e_G\)). Then \(\delta_G'(\chi) =
\sup_{g \in G} \lambda(\chi(g))\) for any homomorphism \(\chi\dd G \to \TTT\).
In particular:
\begin{itemize}
\item if \(\chi\) has infinite or (finite) even order, then \(\delta_G'(\chi) =
 \lambda(-1) = \frac12\);
\item if \(\chi\) has odd order \(k\), then \(\delta_G'(\chi) =
 \frac{k-1}{2k}\).
\end{itemize}
In particular, \(\delta_G'\) induces the discrete topology on \(G'\). So, if
\(G\) is infinite, \(\delta_G'\) is not a compatible \NORM{} on the Pontryagin's
dual of (the discrete group) \(G\).\par
To simplify notation, denote \(q \df \reg{(\delta_G)}\). We know that \(q \leq
\delta_G\). If \(g \in G\) has infinite or (finite) even order, then there is
\(\chi \in G'\) such that \(\chi(g) = -1\) and therefore \(q(g) \geq
\frac{\lambda(\chi(g))}{\delta_G'(\chi)} = 1\), which shows that \(q(g) = 1\)
for such \(g\). On the other hand, if \(g \neq e_G\) has odd order \(k\), take
\(\chi \in G'\) such that \(\chi(g) = e^{2\pi \frac{k-1}{2k}i}\) to conclude
that \(q(g) \geq \frac{\lambda(\chi(g))}{\delta_G'(\chi)} \geq \frac23\). This
shows that \(q\) and \(\delta_G\) are Lipschitz equivalent and, in particular,
\(\delta_G\) is semiregular. However, in general, \(\delta_G\) may be
non-regular. For example, if \(G\) contains an element \(g\) of order \(9\),
then \(q(g^3) \neq 1\). Indeed, if \(\chi \in G'\) has order 3, then \(\chi(g^3)
= 1\). On the other hand, if the order of \(\chi\) differs from 3, then
\(\delta_G'(\chi) \geq \frac25\) and \(\lambda(\chi(g^3)) \leq \frac13\). So,
\(\frac{\lambda(\chi(g^3))}{\delta_G'(\chi)} \leq \frac56\) and, consequently,
\(q(g^3) \leq \frac56\) as well.
\end{exm}

Our nearest goal is to generalise fundamental properties of dual Banach spaces
to the context of \DUAL{} groups. Adapting terminology from Banach space theory,
for every metric Abelian group \(G\) we call the pointwise convergence topology
of \(G'\) the \emph{weak*} topology. The part (C) of the result below is
a counterpart of the classical Banach-Alaoglu theorem for Banach spaces.

\begin{thm}{dual-topo}
Let \(p\) be an arbitrary \NORM{} on an Abelian group \((G,+)\).
\begin{enumerate}[\upshape(A)]
\item The \NORM{} \(p'\) is complete.
\item The topology of \(G'\) induced by \(p'\) coincides with the topology of
 uniform convergence on \(p\)-bounded subsets of \(G\).
\item All closed \(p'\)-balls are compact in the weak* topology.
\end{enumerate}
\end{thm}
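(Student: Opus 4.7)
The plan is to treat the three parts in turn, with (A) and (C) being essentially standard while (B) carries the main technical weight.

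For (A), I take a $p'$-Cauchy sequence $(\chi_n) \subset G'$. The defining inequality $\lambda(\chi_n(x)\chi_m(x)^{-1}) \leq p'(\chi_n\chi_m^{-1})\,p(x)$ makes $(\chi_n(x))$ Cauchy in the complete group $\TTT$ for every $x \in G$, so it converges to some $\chi(x)$. The pointwise limit $\chi$ is a homomorphism. Letting $m \to \infty$ in the Cauchy estimate shows that $\lambda(\chi_n(x)\chi(x)^{-1}) \leq \epsi\,p(x)$ for all $x$ and all $n$ sufficiently large (depending on $\epsi > 0$); by \LEM{key} this forces $\chi_n\chi^{-1}$ (hence also $\chi$) to be a continuous character, and at the same time yields $p'(\chi_n\chi^{-1}) \leq \epsi$, proving convergence of $\chi_n$ to $\chi$ in $p'$.

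For (C), I view $G'$ as a subset of $\TTT^G$, on which the product topology restricts to the weak* topology and is compact by Tychonoff. The set $\bar B_{p'}(R)$ is precisely the intersection inside $\TTT^G$ of the closed conditions $\chi(x+y) = \chi(x)\chi(y)$ (over all $x, y \in G$) together with $\lambda(\chi(x)) \leq R\,p(x)$ (over all $x \in G$), all of which are preserved under pointwise limits; so $\bar B_{p'}(R)$ is closed in the compact space $\TTT^G$, hence compact. Any element of this intersection is Lipschitz with constant $\leq R$, so \LEM{key} guarantees it is genuinely in $G'$.

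Part (B) is the main obstacle and requires a quantitative argument. The easy direction is: if $p'(\chi_\alpha \chi^{-1}) \to 0$ and $B \subset \bar B_p(R)$, then $\sup_{x \in B} \lambda(\chi_\alpha(x)\chi(x)^{-1}) \leq R\,p'(\chi_\alpha\chi^{-1}) \to 0$. For the converse, I write $\psi_\alpha = \chi_\alpha \chi^{-1}$ and suppose $\psi_\alpha \to 1$ uniformly on each $p$-bounded set; the task is to upgrade this to $p'(\psi_\alpha) \to 0$. Given $\delta > 0$, I choose $r > 1/(2\delta)$ so that the trivial estimate $\lambda(\psi_\alpha(x)) \leq 1/2 \leq \delta\,p(x)$ already handles every $x$ with $p(x) \geq r$, and then replay the proof of \LEM{key} on $\bar B_p(r)$: using the constant $\epsi \in (0,1)$ and the property $(\star)$ from that proof, the hypothesis $\sup_{y \in \bar B_p(r)} \lambda(\psi_\alpha(y)) \leq \eta < \epsi$ implies, by taking for $0 < p(x) < r$ the largest integer $N$ with $Np(x) < r$ (whence $p(x) \geq r/(2N)$ and $x, 2x, \ldots, Nx \in B_p(r)$), that $\lambda(\psi_\alpha(x)) \leq \eta/(\epsi N) \leq 2\eta\,p(x)/(\epsi r)$. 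Requiring in addition $\eta < \epsi r \delta / 2$, which holds for all sufficiently late $\alpha$ by the uniform-convergence hypothesis on the $p$-bounded set $\bar B_p(r)$, forces $p'(\psi_\alpha) < \delta$. The only subtle point is the quantitative interplay between the two free parameters $r$ and $\eta$; once this tradeoff is set up correctly, the rest is bookkeeping.
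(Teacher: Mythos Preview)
Your proposal is correct and follows essentially the same approach as the paper. For (A) and (C) you spell out what the paper merely sketches (``left as an exercise'' and a one-line Tychonoff argument), and for (B) both you and the paper reduce to replaying the quantitative estimate in the proof of \LEM{key}: the paper packages the tradeoff as a single parameter (taking the ball radius to be $1/\epsi^2$ and citing the constant $4M/(\epsi r) = 4M\epsi$ directly), while you keep $r$ and $\eta$ separate, but the underlying mechanism is identical.
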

\begin{proof}
Part (A) is left as an exercise. To see (C), it is enough to observe that
\(\bar{B}_{p'}(r)\) is a closed subset (in the pointwise convergence topology)
of the set of all (possibly discontinuous) functions from \(G\) into \(\TTT\),
and to apply Tychonoff theorem. Here we focus only on (B). Since the topology of
uniform convergence on \(p\)-bounded subsets of \(G\) is metrisable, we shall
investigate convergence of ``usual'' sequences in both the topologies on
\(G'\).\par
First assume that
\begin{itemize}
\item[(\(\dag\))] characters \(\chi_1,\chi_2,\ldots \in G'\) converge to \(1\)
 uniformly on all \(p\)-bounded subsets of \(G\).
\end{itemize}
Fix \(\epsi \in (0,\frac14)\) and choose \(N\) so large that
\begin{equation}\label{eqn:aux4}
p(g) < \frac{1}{\epsi^2} \implies \lambda(\chi_n(g)) < \epsi \UP{ for all }
n \geq N.
\end{equation}
Now fix \(n \geq N\) and observe that \(\chi_n(B_p(\frac{1}{\epsi^2})) \subset
B_{\lambda}(\epsi)\). It follows from the proof of \LEM{key} that then
\(p'(\chi_n) \leq 4M \epsi\) where \(M > 0\) is a constant chosen so that
\eqref{eqn:key} holds for \(q = \lambda\). This shows that
\begin{equation}\label{eqn:aux5}
\lim_{n\to\infty} p'(\chi_n) = 0.
\end{equation}
Since \eqref{eqn:aux5} trivially implies (\(\dag\)), the proof is finished.
\end{proof}

Recall that the \emph{weak} topology on a topological Abelian group \(G\) is
the smallest topology on \(G\) with respect to which all characters from \(G'\)
are continuous. (In other words, a net \((x_{\sigma})_{\sigma\in\Sigma} \subset
G\) converges to \(g \in G\) in the weak topology iff \(\lim_{\sigma\in\Sigma}
\chi(x_{\sigma}) = \chi(g)\) for any continuous homomorphism \(\chi\dd G \to
\TTT\).) We underline that the weak topology on \(G\) may be non-Hausdorff (it
is Hausdorff iff \(G'\) separates the points of \(G\)).\par
It follows from the very definitions of the weak and weak* topologies that for
any reflexive \NORM{} \(p\) on an Abelian group \(G\) the canonical homomorphism
\(\kappa_G\dd G \to G''\) is a topological isomorphism when \(G\) and \(G''\)
are equipped with, respectively, the weak and the weak* topology. This remark
combined with \THM{dual-topo} yields

\begin{cor}{reflex-WK}
For any reflexive \NORM{} \(p\) on an Abelian group \((G,+)\) the following
properties hold.
\begin{enumerate}[\upshape(A)]
\item The \NORM{} \(p\) is complete.
\item The weak topology of \((G,+,p)\) is Hausdorff.
\item All closed \(p\)-balls in \(G\) are weakly compact.
\item The \NORM{} \(p'\) (on \(G'\)) is reflexive.
\end{enumerate}
\end{cor}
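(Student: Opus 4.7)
The common thread is that reflexivity of $p$ makes $\kappa_G\dd(G,p)\to(G'',p'')$ a bijective isometry which is moreover a homeomorphism from $G$ endowed with its weak topology onto $G''$ endowed with its weak* topology (this already follows from the very definitions of the two topologies, as noted just before the corollary). Each of (A)--(D) is then obtained by transporting, via $\kappa_G$, a corresponding property of $(G',p')$ that is supplied by $\THM{dual-topo}$ or $\PRO{regular}$.

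For (A), I would apply $\THM{dual-topo}$(A) to the \NORM{} $p'$ on $G'$ to get that $(p')'=p''$ is complete; since $\kappa_G$ maps $(G,p)$ isometrically onto $(G'',p'')$, $p$ inherits completeness. For (B), injectivity of $\kappa_G$ is exactly the statement that $G'$ separates the points of $G$, which is precisely the condition for the weak topology to be Hausdorff. For (C), $\kappa_G$ sends each closed $p$-ball onto the corresponding closed $p''$-ball in $G''$, and the latter is weak*-compact by $\THM{dual-topo}$(C) applied to $G'$; transporting this back through the weak/weak*-homeomorphism $\kappa_G$ gives weak compactness of closed $p$-balls in $G$.

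Only (D) requires any further work. For the isometric half, I would verify directly that $\kappa_{G'}$ is isometric: since reflexivity gives $p''(\eE_x)=p(x)$ for every $x\in G$, the defining supremum for $p'(\chi)$ (taken over $x\in G$) can be rewritten as a supremum over $\eta=\eE_x\in G''$, which yields $p'(\chi)\leq p'''(\kappa_{G'}(\chi))$; the reverse inequality holds because $\kappa_{G'}$ is always non-expansive. (One can equally well invoke the first step of the induction carried out in the proof of $\PRO{regular}$(b), which already establishes $p'=\reg{p'}$.) For surjectivity, given $\xi\in G'''=(G'')'$, I would set $\chi\df\xi\circ\kappa_G$; this is a composition of continuous homomorphisms $G\to\TTT$, hence an element of $G'$. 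The desired identity $\kappa_{G'}(\chi)=\xi$ then reduces to $\eta(\chi)=\xi(\eta)$ for every $\eta\in G''$, which, after writing $\eta=\kappa_G(x)$ via reflexivity, becomes the tautology $\chi(x)=\xi(\kappa_G(x))$.

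The only place where the full strength of reflexivity (as opposed to mere regularity) is used is the surjectivity argument in (D), which relies on every $\eta\in G''$ being of the form $\kappa_G(x)$. I do not expect an essential obstacle elsewhere: the structural pattern is simply that reflexivity of $p$ lifts every assertion of $\THM{dual-topo}$ for $(G',p')$ to the corresponding assertion for $(G,p)$.
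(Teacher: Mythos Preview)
Your proposal is correct and follows the paper's own approach: the paper's entire ``proof'' is the single sentence preceding the corollary (that \(\kappa_G\) is a weak/weak* homeomorphism) together with the phrase ``this remark combined with \THM{dual-topo} yields,'' and you have simply unpacked what that combination means for each of (A)--(D). Your treatment of (D), where the paper gives no hint at all, is exactly the natural argument and is a welcome addition; note that the regularity half is already contained in \PRO{regular}(b) (whose proof in fact establishes \(p'=\reg{p'}\)), so your direct computation and your parenthetical remark amount to the same thing.
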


More on reflexive \NORM[s] the reader will find in Section~\ref{sec:refl-gen}.

\begin{rem}{functor}
In the theory of Banach spaces, to each continuous linear operator \(T\dd X \to
Y\) there naturally corresponds dual operator \(T^*\dd Y^* \to X^*\) given by
\(T^*(\phi) = \phi \circ T\), which is continuous as well. A similar concept of
duality for continuous homomorphisms between LCA groups is also well studied.
However, in metric duality (of metric Abelian groups) a strange phenomenon
occurs: although for each continuous homomorphism \(\rho\dd (G,+,p) \to
(H,+,q)\) the dual homomorphism \(\rho'\dd (H',\cdot,q') \to (G',\cdot,p')\) is
well defined (by \(\rho'(\chi) = \chi \circ \rho\)), in general it may be
discontinuous (see an example below). In this concept, a more appropriate
``arrows'' (using the language of category theory) between metric Abelian groups
(than just continuous homomorphisms) are homomorphism that are both continuous
and \emph{bounded} (that is, those that transform metrically bounded subsets of
the domain to such subsets of the target group). It may briefly be shown that if
\(\phi\dd (G,+,p) \to (H,+,q)\) is such a homomorphism, then \(\phi'\) is so as
well. (Continuity follows from the boundedness of \(\phi\) and from part (B) of
\THM{dual-topo}, whereas boundedness from equicontinuity of bounded sets in
\(H'\) and from the continuity of \(\phi\); cf. the proof of \LEM{key}).\par
Let us here give a short example of a continuous homomorphism whose dual is
discontinuous. To this end, denote by \(\delta_{\RRR}\), as in \EXM{discrete},
the discrete \NORM{} on \((\RRR,+)\) and let \(m\) and \(e\) be, respectively,
the natural \NORM{} on \((\RRR,+)\) and the identity homomorphism from
\((\RRR,+,\delta_{\RRR})\) into \((\RRR,+,m)\). It is clear that \(e\) is
continuous and that its dual homomorphism \(e'\), from \((\RRR,+,m)' \equiv
(\RRR,+,m)\) into \((\RRR,+,\delta_K)'\), is the identity as well. But then
\(e'\) is discontinuous, since its domain is connected and the target group is
discrete (consult \EXM{discrete}).
\end{rem}

\section{Metric duality for Polish LCA groups}

In this section we shall discuss in a greater detail metric duality for LCA
groups. We are mainly interested in such \NORM[s] on LCA groups \(G\) for which
the dual \NORM{} is compatible with the Pontryagin's topology of the dual group.
In particular, both the groups \(G\) and \(G'\) have to be metrisable. This is
the case precisely when \(G\) is Polish. That is why in the results of this part
we make such an assumption. Metric duality for general LCA groups will be
discussed in Section~\ref{sec:metr-str}.\par
The main tool of this section is a classical Glicksberg's theorem \cite{gli}
which reads as follows.

\begin{thm}{glick}
A weakly compact subset of a LCA group is compact.
\end{thm}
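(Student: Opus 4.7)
The plan is to route through the Bohr compactification \(bG\) of \(G\). Since every LCA group is maximally almost periodic (a consequence of Pontryagin duality), the canonical homomorphism \(\iota\dd G\to bG\) is continuous and injective, and the weak topology on \(G\) is exactly the subspace topology induced by \(\iota\). Hence a weakly compact \(K\subset G\) is a compact subset of the compact Hausdorff group \(bG\). The weak topology is coarser than the original topology of \(G\), so the identity map \(K_{\UP{orig}}\to K_{\UP{weak}}\) is a continuous bijection; the task is therefore to show that this bijection is a homeomorphism, equivalently that \(K_{\UP{orig}}\) is already compact.

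To do this I would reduce via the structure theorem for LCA groups: up to topological isomorphism \(G\cong\RRR^n\oplus H\) where \(H\) contains a compact open subgroup \(K_0\), and \(H/K_0\) is a discrete Abelian group. This breaks the problem into three classes of factors. The compact case is trivial. For \(\RRR^n\) I would argue contrapositively: if \(K\) were not bounded in the usual sense, one could extract a sequence \(x_k\in K\) with \(\|x_k\|\to\infty\), and then choose \(\lambda\in\RRR^n\) so that the real sequence \((\lambda\cdot x_k)\) has no \(\RRR/\ZZZ\)-convergent subsequence; this would refute weak compactness via the character \(x\mapsto e^{2\pi i\lambda\cdot x}\). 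For a discrete Abelian group \(D\) one must show that weakly compact subsets are finite, approaching this by producing, for an arbitrary one-to-one sequence \((d_n)\subset D\), a continuous character under which \((d_n)\) has no convergent subnet in \(\TTT\); the main tools are compactness of \(\hat{D}\) together with the richness of the characters provided by Pontryagin duality.

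The main obstacle I foresee is the discrete case. While the existence of the required separating character is intuitively plausible, it demands a delicate use of duality---most likely a Baire-category argument inside \(\hat{D}\), or an explicit construction exploiting that \(\hat{D}\) is a large compact group on which cylinder sets depending on finitely many \(d_n\) form a rich family. Once the three factor cases are settled, gluing them uses that the weak topology on a finite direct sum of LCA groups is controlled by the weak topologies of the factors, and the final upgrade from relative compactness to compactness follows because the weak topology is coarser than the original (so weakly closed sets, in particular weakly compact ones, are already closed in the original topology of \(G\)).
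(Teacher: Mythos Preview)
The paper does not prove this statement: it is quoted as a classical theorem of Glicksberg, cited to \cite{gli}, and then used as a black box throughout Section~3. There is therefore no in-paper argument to compare your proposal against.

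Your sketch contains a genuine error in the \(\RRR^n\) case. You assert that one can choose \(\lambda\) so that \((\lambda\cdot x_k)\) has no \(\RRR/\ZZZ\)-convergent subsequence, but \(\RRR/\ZZZ\cong\TTT\) is compact metric and hence sequentially compact: \emph{every} sequence in it has a convergent subsequence, regardless of \(\lambda\). What the contrapositive actually requires is that no sub\emph{net} of \((x_k)\) Bohr-converges to an element of \(K\) (or even of \(\RRR^n\)), and a single character cannot witness this. Indeed, every Bohr-open neighbourhood of \(0\) in \(\RRR\) is unbounded---basic Bohr-open sets are cut out by finitely many characters, and Kronecker's approximation theorem places arbitrarily large reals in any such set---so there \emph{do} exist nets in \(\RRR\) with modulus tending to infinity that nevertheless Bohr-converge to \(0\). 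Producing one separating character therefore cannot by itself contradict weak compactness. The \(\RRR^n\) case, like the discrete case you rightly flag as the crux, needs a more global tool: Glicksberg's original uniform-boundedness argument, or a measure-theoretic route (for instance via the Fourier transform and Riemann--Lebesgue) with careful attention to the net-versus-sequence distinction. Your structure-theoretic reduction and the final gluing step are reasonable in outline, but as written the proposal does not supply the essential ingredient for either of the two nontrivial factor types.
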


We begin with a simple

\begin{pro}{compact}
Let \(p\) be a compatible \NORM{} on a compact metrisable Abelian group \(G\).
Then:
\begin{enumerate}[\upshape(a)]
\item \(p\) is semireflexive;
\item \(p'\) is compatible with the Pontryagin's topology of \(G'\) (that is,
 \(p'\) induces the discrete topology);
\item \(\reg{p}\) and \(p'\) are reflexive.
\end{enumerate}
\end{pro}
\begin{proof}
First observe that \(\reg{p}\) is a \NORM. Since \(\reg{p} \leq p\) and \(G\) is
compact, \(\reg{p}\) is compatible. Consequently, \(p\) is semiregular and
\(\reg{p}\) is regular.\par
Further, there is a constant \(M > 0\) such that \(p(g) \leq M\) for any \(x \in
G\). Hence, \(p'(\chi) \geq \frac{1}{3M}\) for any non-constant \(\chi \in G'\),
because for each such \(\chi\) there is \(g \in G\) such that \(\lambda(\chi(g))
\geq \frac13\). This shows (b). In particular, \(\kappa_G\) is surjective, by
the Pontryagin's theorem. But then \(p\) is semireflexive and both \(\reg{p}\)
and \(p'\) are reflexive.
\end{proof}

In general Polish LCA groups situation is more subtle, as shown by

\begin{thm}{comp-dual}
The dual \NORM{} of a compatible \NORM{} \(p\) on a metrisable LCA group \(G\)
is compatible with the Pontryagin's topology of \(G'\) iff \(p\) is proper; that
is, if \(\bar{B}_p(r)\) is a compact set in \(G\) for each \(r > 0\). If \(p\)
is proper, it is semireflexive and \(\reg{p}\) and \(p'\) are both reflexive and
proper.\par
In particular, a Polish LCA group admits a compatible reflexive \NORM{} whose
dual \NORM{} is compatible with the Pontryagin's topology of the dual group.
\end{thm}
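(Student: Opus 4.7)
The plan is to deduce the ``iff'' from \THM{dual-topo} combined with Glicksberg's theorem. If \(p\) is proper, then the \(p\)-bounded subsets of \(G\) and the relatively compact ones coincide, so uniform convergence on \(p\)-bounded sets equals uniform convergence on compact sets; by \THM{dual-topo}(B) the former is the \(p'\)-topology on \(G'\), and by definition the latter is the Pontryagin topology on \(G'\), giving one implication at once.

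For the converse, the crux is to show that \(p'\) is itself \emph{proper} on the LCA group \(G'\). By \THM{dual-topo}(C) each closed ball \(\bar{B}_{p'}(r)\) is compact in the weak* topology, i.e., in pointwise convergence on \(G\). Under Pontryagin duality the characters of the LCA group \(G'\) are precisely the evaluations at elements of \(G\); hence this weak* topology coincides with the ``weak'' topology on \(G'\) in the sense of \THM{glick}, and Glicksberg's theorem therefore promotes weak* compactness of \(\bar{B}_{p'}(r)\) to Pontryagin compactness, proving properness of \(p'\). Applying the easy direction to \((G', p')\) then yields that \(p''\) is compatible with the Pontryagin topology of \(G''\); a repetition of the Banach--Alaoglu-plus-Glicksberg argument inside \(G''\) shows that \(p''\) is proper as well. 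Pontryagin duality identifies \(\kappa_G \dd G \to G''\) as a topological isomorphism, so \(\reg{p} = p'' \circ \kappa_G\) is a proper compatible \NORM{} on \(G\); the inequality \(\reg{p} \leq p\) then forces \(\bar{B}_p(r) \subseteq \bar{B}_{\reg{p}}(r)\), the latter being compact, so \(\bar{B}_p(r)\) is compact and \(p\) is proper.

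The remaining assertions unwind from what has been built. Since \(\kappa_G\) is a topological isomorphism, \(p\) is semireflexive; applying \PRO{regular}(c) to the (now semiregular) \(p\) and combining with surjectivity of \(\kappa_G\) (Pontryagin duality) yields reflexivity of \(\reg{p}\). Regularity of \(p'\) is established in the argument of \PRO{regular}(b) (where one shows \(q = \reg{q}\) for \(q = p'\)), and surjectivity of \(\kappa_{G'}\) follows from Pontryagin duality applied to the now Polish LCA group \(G'\); hence \(p'\) is reflexive. Properness of both \(\reg{p}\) and \(p'\) has already been noted. The ``in particular'' statement follows by invoking Struble's classical theorem that every Polish LCA group admits a compatible proper invariant metric \(p\); the \NORM{} \(\reg{p}\) is then compatible, reflexive and proper, and its dual \((\reg{p})' = p'\) is compatible with the Pontryagin topology. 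The main obstacle is the hard direction: transferring weak* compactness of \(p'\)-balls to Pontryagin compactness by identifying the weak* topology on \(G'\) with the Glicksberg weak topology on the LCA group \(G'\) through Pontryagin duality, which is precisely where \THM{glick} is the indispensable tool.
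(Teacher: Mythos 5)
Your proposal is correct and follows essentially the same route as the paper: the forward direction via \THM{dual-topo}(B) and the identification of \(p\)-bounded sets with relatively compact ones, properness of \(p'\) via \THM{dual-topo}(C) plus Pontryagin--van Kampen (to identify weak* with weak) and \THM{glick}, the converse by bootstrapping through \(p''\) and the inequality \(\reg{p}\leq p\), and the final claim via Struble's theorem. No gaps worth flagging.
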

\begin{proof}
First assume \(p\) is a proper \NORM{} on a Polish LCA group. Since each compact
set in \(G\) is \(p\)-bounded, it follows from part (B) of \THM{dual-topo} that
\(p'\) is compatible with the Pontryagin's topology of \(G'\). In particular,
\((G',\cdot,p')\) is LCA, \(\kappa_G\) is a topological isomorphism (thanks to
the Pontryagin--van Kampen theorem) and the weak* topology of this group
coincides with the weak one. So, item (C) of \THM{dual-topo} combined with
\THM{glick} yields that \(p'\) is proper as well. Consequently, \(\reg{p}\) and
\(p'\) are reflexive (cf. \PRO{regular}) and \(p\) is semireflexive.\par
Now assume \(p\) is an arbitrary compatible \NORM{} on a metrisable LCA group
\(G\) such that \(p'\) is compatible with the Pontryagin's topology of \(G'\).
Then, again thanks to \THM{dual-topo} and \THM{glick}, \(p'\) is proper. So, it
follows from the first part of the proof that \(p''\) is compatible, proper and
reflexive. Consequently, \(p\) is semireflexive and \(\reg{p}\) is proper and
reflexive. But then, since \(\reg{p} \leq p\), also \(p\) is proper.\par
Now to conclude the proof it is sufficient to apply Struble's theorem \cite{str}
which asserts that each Polish locally compact group admits a proper compatible
left-invariant metric.
\end{proof}

The above result implies that a proper compatible \NORM{} on a LCA group is
reflexive iff it is regular.\par
The reader will easily prove the following

\begin{pro}{1-1LCA}
For any Polish LCA group \(G\) the assignment
\[p \leftrightarrow p'\]
establishes a one-to-one correspondence between reflexive proper compatible
\NORM[s] \(p\) on \(G\) and such \NORM[s] \(q = p'\) on \(G'\).
\end{pro}

\begin{rem}{in-the-large}
It is readily seen that for two reflexive proper compatible \NORM[s] \(p\) and
\(q\) on a Polish LCA group \(G\) the following equivalence holds:
\[p \leq q \iff q' \leq p'.\]
In particular, the concept of metric duality can be used to translate
the structure of \(G\) ``in the large'' [resp. ``in the small''] to
the structure of \(G'\) ``in the small'' [resp. ``in the large'']. This remark
seems to be particularly interesting when investigating metrisable compact
Abelian groups ``in the small'' or countable Abelian groups ``in the large.''
\end{rem}

The following result illustrates how the regularisation (introduced in
\DEF{reg-refl}) works. Recall that a function \(\omega\dd \RRR_+ \to \RRR_+\) is
said to be \emph{quasi-concave} (consult, e.g., Definition~5.6 on page 69 in
\cite{b-s}) if it is monotone increasing, vanishes at 0 and the function
\((0,\infty) \ni t \mapsto \frac{\omega(t)}{t} \in \RRR_+\) is monotone
decreasing. Continuous quasi-concave functions may be regarded as very ``nice''
moduli of continuity, as they automatically satisfy the inequality
\begin{equation}\label{eqn:tri-ineq}
\omega(x+y) \leq \omega(x)+\omega(y) \qquad (x, y \geq 0).
\end{equation}
Recall also that if \(\omega\dd \RRR_+ \to \RRR_+\) is a monotone increasing
function, then the assignment \(\RRR \ni x \mapsto \omega(|x|) \in \RRR_+\)
defines a proper compatible \NORM{} on \((\RRR,+)\) iff \(\omega\) is
continuous, vanishes only at \(0\), satisfies \eqref{eqn:tri-ineq} and
\(\lim_{t\to\infty} \omega(t) = \infty\).

\begin{pro}{q-c}
Let \(p\) be a proper compatible \NORM{} on \((\RRR,+)\) that is monotone
increasing on \(\RRR_+\). Under a canonical identification of \(\RRR'\) with
\(\RRR\) introduced in \UP{\PRO{reals}},
\begin{equation}\label{eqn:aux6}
p'(t) = \sup_{s \in (0,1/2]} \frac{s}{p(\frac{s}{t})} \qquad (t \in \RRR).
\end{equation}
In particular:
\begin{itemize}
\item \(p\) is reflexive iff \(p\restriction{\RRR_+}\) is quasi-concave;
\item if \(p\) is reflexive, then \(p'(t) = \frac{1}{2 p(1/(2t))}\).
\end{itemize}
\end{pro}
\begin{proof}
Observe that \(\lambda(e^{2\pi ti}) = \dist(t,\ZZZ)\) (where \(\dist(t,\ZZZ) =
\inf \{|t-k|\dd\ k \in \ZZZ\}\)) and therefore
\[p'(t) = \sup_{x>0} \frac{\dist(tx,\ZZZ)}{p(x)}.\]
In what follows, we assume \(t > 0\) and express \(tx\) in the form \(tx = k+u\)
where \(k\) is a non-negative integer and \(u \in [0,1)\). Then \(x =
\frac{k+u}{t}\) and:
\begin{multline*}
p'(t) = \sup_{k,u} \frac{\dist(k+u,\ZZZ)}{p(\frac{k+u}{t})} = \sup_{k,u}
\frac{\dist(u,\ZZZ)}{p(\frac{k+u}{t})} \stackrel{(*)}{=} \sup_{u \in (0,1)}
\frac{\dist(u,\ZZZ)}{p(\frac{u}{t})} \\= \sup_{u \in (0,1)}
\frac{\min(u,1-u)}{p(\frac{u}{t})} = \sup_{s \in (0,1/2]}
\max\left(\frac{s}{p(\frac{s}{t})},\frac{s}{p(\frac{1-s}{t})}\right) =
\sup_{s \in (0,1/2]} \frac{s}{p(\frac{s}{t})},
\end{multline*}
where the last equation and the one denoted by \((*)\) follow from the property
that \(p\) is monotone increasing on \(\RRR_+\). This last property combined
with \eqref{eqn:aux6} yields that \(p'\restriction{\RRR_+}\) is monotone
increasing as well. Moreover,
\begin{equation}\label{eqn:aux7}
p'(t)/t = \sup_{v \in (0,1/(2t)]} \frac{v}{p(v)}
\end{equation}
and therefore \(p'\restriction{\RRR_+}\) is quasi-concave.\par
Taking into account that \(p'\) satisfies all the conditions assumed about
\(p\), we conclude that \(p''\), or \(\reg{p}\), is quasi-concave. So, if \(p\)
is regular, it is quasi-concave. Conversely, if \(p\) is quasi-concave, then
\eqref{eqn:aux7} reduces to \(p'(t)/t = \frac{1}{2t p(1/(2t))}\). Replacing in
this argument \(p\) by \(p'\), we get that \(p'' = p\) and thus \(p\) is
reflexive, which finishes the proof.
\end{proof}

\section{Reflexive metric groups}\label{sec:refl-gen}

In this section we establish fundamental propoerties of reflexive metric groups.
Our main goal is to show that the topology of the \DUAL{} group of such a group
does not depend on the choice of a compatible reflexive \NORM. To be more
precise, first note that if a metrisable Abelian group \(G\) is equipped with
a compatible \NORM{} \(p\), the group \(G'\) as a set with a binary action is
independent of \(p\) (indeed, \(G' = \HOM{G}{\TTT}\)), but its topology (induced
by \(p'\)) does depend on \(p\), even if \(p\) is regular --- as shown by
a combination of \EXM{discrete} and \THM{comp-dual}. A situation dramatically
changes when one deals with reflexive \NORM[s]: if \(p\) and \(q\) are two
reflexive compatible \NORM[s] on \(G\), then \(p'\) and \(q'\) are equivalent.
In particular, if a compatible \NORM{} on a metrisable LCA group is reflexive,
then it is proper and hence the group is Polish (see \COR{refl-LCA} below).\par
To formulate the main result of this section, we recall the concept of
(topological) duality for topological Abelian groups \cite{cdm}. If \(G\) is
such a group, then its \emph{topological dual} is the group \(\widehat{G} =
\HOM{G}{\TTT}\) endowed with the compact-open topology. (So, \(\widehat{G}\) and
\(G'\) coincide as sets with binary actions, but their topologies may differ.)
In a similar manner one defines the \emph{topological second dual} \SECD{G} (as
the topological dual of \(\widehat{G}\)). We call \(G\) \emph{topologically
reflexive} if the homomorphism \(G \ni x \mapsto \eE_x \in \SECD{G}\) (where
\(\eE_x\) is given by \eqref{eqn:bidual}) is an isomorphism of topological
groups. (Classically a group with such a property is called briefly reflexive.
We have added here adverb ``topologically'' to distinguish this notion from
metric reflexivity.)

\begin{thm}{main-refl}
Let \((G,+,p)\) be a reflexive metric group. Then:
\begin{enumerate}[\upshape(A)]
\item the topology of \((G',\cdot,p')\) coincides with the topology of uniform
 convergence on weakly compact subsets of \(G\);
\item the topological weights of \(G\) and \(G'\) coincide;
\item \(G\) is topologically reflexive.
\end{enumerate}
In particular, if \(G\) is Polish, so is \(G'\).
\end{thm}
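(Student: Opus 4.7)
The plan is to establish (A) first, deduce (B) and (C) from (A) by Ascoli/equicontinuity arguments, and read off the Polish corollary from (B) together with the completeness of $p'$ supplied by \THM{dual-topo} (A).

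For (A), one inclusion is routine: \THM{dual-topo} (B) identifies the $p'$-topology as uniform convergence on $p$-bounded subsets of $G$, and \COR{reflex-WK} (C) shows each closed $p$-ball is weakly compact, so every $p$-bounded set lies inside a weakly compact one and hence uniform convergence on weakly compact sets refines the $p'$-topology. For the converse I need every weakly compact $K\subseteq G$ to be $p$-bounded; via the isometric isomorphism $\kappa_G$ this reduces to showing that every weak*-compact $C\subseteq G''$ is $p''$-bounded. Since $(G',p')$ is complete by \THM{dual-topo} (A), and hence Baire, and $C$ is a pointwise-compact family of continuous characters on $G'$, a Banach--Steinhaus-type equicontinuity theorem yields equicontinuity of $C$ at $e_{G'}$; the quantitative estimate from the proof of \LEM{key} then converts this into a uniform bound on Lipschitz constants, i.e., $p''$-boundedness. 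I expect this equicontinuity step to be the main technical obstacle and the reason completeness of $G$ is crucial.

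For (C), the strategy is to identify $(G'',p'')$ with $(\SECD{G},\text{compact-open})$ as topological groups and then compose with $\kappa_G$. The sets coincide: the compact-open topology on $\widehat G=G'$ is coarser than the $p'$-topology (every compact subset of $G$ is $p$-bounded), so any compact-open continuous character of $\widehat G$ is automatically $p'$-continuous, giving $\SECD{G}\subseteq G''$; conversely, by reflexivity each $\eE\in G''$ equals some $\eE_x$, which is pointwise (hence compact-open) continuous on $G'$, giving $G''\subseteq\SECD{G}$. For the topologies: by \LEM{key} a subset of $G'$ is $p'$-bounded iff it is equicontinuous, while Ascoli's theorem says a subset of $(\widehat G,\text{compact-open})$ is compact iff it is compact-open closed and equicontinuous. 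Since every $p'$-bounded set is contained in its compact-open closure (still equicontinuous and compact-open closed, hence compact), both families of subsets of $G'$ define the same uniform-convergence topology on the common set $G''=\SECD{G}$, so the $p''$-topology coincides with the compact-open topology. Composing with the isometric isomorphism $\kappa_G\dd G\to G''$ then yields the desired topological isomorphism $G\to\SECD{G}$.

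For (B), $\kappa_G$ being isometric yields $\operatorname{weight}(G)=\operatorname{weight}(G'')$, so by the symmetry of the construction it suffices to prove $\operatorname{weight}(G')\leq\operatorname{weight}(G)=:\tau$. Pick a dense $D\subseteq G$ with $|D|=\tau$. By \COR{reflex-WK} (C) applied to the reflexive $p'$ (see \COR{reflex-WK} (D)), each ball $\bar B_{p'}(n)$ is weakly compact; since it is $p'$-bounded and thus equicontinuous (\LEM{key}), its weak topology agrees with the topology of pointwise convergence on $D$, exhibiting $\bar B_{p'}(n)$ as a compact Hausdorff space of weight at most $\tau$. A density-upgrading argument in the spirit of the classical proof that a reflexive separable Banach space has a separable dual (exploiting that the weak and weak* topologies on the reflexive $G'$ coincide) then produces a $p'$-dense subset of $\bar B_{p'}(n)$ of cardinality at most $\tau$, and the countable union $G'=\bigcup_n\bar B_{p'}(n)$ gives $\operatorname{weight}(G')\leq\tau$. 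The Polish corollary is then immediate: for Polish $G$ we have $\operatorname{weight}(G)=\aleph_0$, so $\operatorname{weight}(G')=\aleph_0$ by (B), and $p'$ is complete by \THM{dual-topo} (A), whence $G'$ is Polish.
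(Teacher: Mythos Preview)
Your arguments for (A) and (C) are essentially the paper's: (A) is exactly \COR{reflex-bdd}, resting on the equicontinuity of pointwise-compact families of characters on a completely metrisable group (the paper isolates this as \PRO{equi}, proved via Pettis' lemma), and (C) is the content of \PRO{metr-refl-topo}, argued just as you outline via Ascoli and the equivalence ``$p'$-bounded $\Leftrightarrow$ equicontinuous''.

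The gap is in (B). You correctly reach the point that each ball $\bar B_{p'}(n)$, in the weak topology, is a compact Hausdorff space of weight at most $\tau=w(G)$. But the passage from ``weak-weight $\le\tau$'' to ``$p'$-density $\le\tau$'' is exactly the hard step, and the phrase ``density-upgrading argument in the spirit of the classical proof that a reflexive separable Banach space has a separable dual'' does not supply it. The classical Banach-space arguments (either via Hahn--Banach separating a functional from a closed linear span, or via the theorem that $Y^*$ separable $\Rightarrow Y$ separable, which picks almost-norming vectors and takes their linear span) use convexity and linear spans in an essential way; there is no group-theoretic substitute for ``closed linear span'' that would let a weakly dense countable set generate a $p'$-dense one. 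Indeed, in the Banach setting the dual unit ball is always weak*-separable when the predual is separable, yet need not be norm-separable (e.g.\ $B_{\ell^\infty}$), so weak compactness of the ball alone cannot be the mechanism---reflexivity must enter through some structural fact, and in the group category that fact is not linear.

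The paper's route to (B) is genuinely different and supplies the missing idea: it works on the $G$-side rather than the $G'$-side, showing that each $K_n=(\bar B_p(n),\text{weak})$ is an \emph{Eberlein compactum}. This is obtained by mapping $K_n$ into $C(M_m,\CCC)$ (with $M_m=(\bar B_{p'}(m),\text{weak*})$ compact) via $x\mapsto \eE_x\!\restriction_{M_m}$, observing the image is pointwise-compact and bounded, and invoking Grothendieck's theorem to conclude it is weakly compact in $C(M_m,\CCC)$; a countable product of such images is Eberlein, hence so is $K_n$. For Eberlein compacta one has $w(K_n)=\operatorname{dens}(K_n)$, and a $p$-dense subset of $\bar B_p(n)$ of size $\le w(G)$ is already weakly dense, giving $w(K_n)\le w(G)$. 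Finally $G'$ embeds topologically into $\prod_n C(K_n,\CCC)$, and $w(C(K_n,\CCC))=w(K_n)$, yielding $w(G')\le w(G)$. The Eberlein/Grothendieck step is the substantive input your sketch is missing.
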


A proof of the above result needs some preparations. Regarding part (C), it is
worth noting that the \underline{topological} dual group of a topologically
reflexive metrisable Abelian group is in general non-metrisable. Moreover, Smith
\cite{smi} proved that all Banach spaces are topologically reflexive, in
contrast to metric duality which in this context coincides with Banach space
duality. So, in general, the reverse implication in item (C) (that is,
statement that \textit{the \NORM{} of a topologically reflexive metric Abelian
group is reflexive}) is false.\par
In the sequel we shall need the following result due to Pettis (cf.
Corollary~2.1 in \cite{pet}).

\begin{lem}{pet}
Let \(u_n\dd M \to H\ (n > 0)\) be a sequence of continuous homomorphisms from
a completely metrisable metric group \((M,\cdot,\rho)\) into a metric group
\((H,\cdot,q)\) such that the sequence \((u_n(x))_{n=1}^{\infty}\) converges in
\(H\) for any \(x \in M\). Then all \(u_n\) are equicontinuous; that is, for any
\(\epsi > 0\) there is \(\delta > 0\) such that for any \(g \in G\):
\[\rho(g) \leq \delta \implies q(u_n(g)) \leq \epsi \UP{ for any } n > 0.\]
\end{lem}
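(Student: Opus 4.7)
The plan is a Baire category argument in the spirit of Banach--Steinhaus, transplanted to the metric group setting. Fix \(\epsi > 0\). Since \((u_n(x))_{n=1}^{\infty}\) converges in \(H\) for every \(x \in M\), it is \(q\)-Cauchy, so each \(x \in M\) lies in some set
\[
F_k \df \bigcap_{n,m \geq k} \{x \in M \dd\ q(u_n(x) \cdot u_m(x)^{-1}) \leq \epsi\},
\]
and each \(F_k\) is closed by the continuity of the \(u_n\) and of \(q\). Hence \(M = \bigcup_{k \geq 1} F_k\), and because \((M,\cdot,\rho)\) is completely metrisable, Baire's theorem furnishes an index \(k_0\) and a \(\rho\)-open set \(x_0 \cdot B_\rho(\delta_1) \subseteq F_{k_0}\).

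Next I would transfer this local bound to a neighbourhood of \(e_M\). For \(g \in B_\rho(\delta_1)\) and \(n,m \geq k_0\), both \(x_0 g\) and \(x_0\) belong to \(F_{k_0}\), so the homomorphism property (together with the Abelian structure of \(H\), which is the setting of the paper) gives
\[
u_n(g) \cdot u_m(g)^{-1} = \bigl(u_n(x_0 g) \cdot u_m(x_0 g)^{-1}\bigr) \cdot \bigl(u_n(x_0) \cdot u_m(x_0)^{-1}\bigr)^{-1},
\]
whence \(q(u_n(g) \cdot u_m(g)^{-1}) \leq 2\epsi\) by the triangle inequality and the inversion-invariance \(q(h^{-1}) = q(h)\).

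To turn this control on differences into control on the values \(u_n(g)\), I would specialise \(m = k_0\) and use the continuity of the single map \(u_{k_0}\) at \(e_M\) to pick \(\delta_2 \in (0,\delta_1]\) with \(q(u_{k_0}(g)) \leq \epsi\) whenever \(\rho(g) \leq \delta_2\). Then
\[
q(u_n(g)) \leq q(u_n(g) \cdot u_{k_0}(g)^{-1}) + q(u_{k_0}(g)) \leq 3\epsi
\qquad (n \geq k_0,\ \rho(g) \leq \delta_2).
\]
For the finitely many indices \(n < k_0\), the individual continuity of each \(u_n\) at \(e_M\) supplies a single \(\delta_3 > 0\) with \(q(u_n(g)) \leq \epsi\) whenever \(\rho(g) \leq \delta_3\) and \(n < k_0\). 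Taking \(\delta \df \min(\delta_2,\delta_3)\) (and applying the whole argument with \(\epsi/3\) in place of \(\epsi\) if one wants the clean constant) delivers the required equicontinuity.

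The main obstacle is conceptual rather than computational: one must use the pointwise \emph{Cauchy} behaviour of \((u_n(x))_n\) (rather than mere pointwise boundedness) to produce the right closed exhaustion for Baire's theorem, so that the bound extracted from the category argument controls the differences \(u_n \cdot u_m^{-1}\) on an entire \(\rho\)-ball; the Abelian group structure then relocates this bound from a translate of \(e_M\) back to a neighbourhood of \(e_M\), and only at that point does the (a priori non-uniform) continuity of a single \(u_{k_0}\) suffice to finish the job.
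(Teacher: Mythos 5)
Your argument is correct, and it is worth noting that the paper does not prove this lemma at all: it is quoted as Corollary~2.1 of Pettis's paper \cite{pet}, so any proof you supply is by definition "a different route." What you give is the standard Banach--Steinhaus-type Baire category argument, correctly adapted: the pointwise Cauchy condition (not mere boundedness) yields the closed exhaustion \(M=\bigcup_k F_k\), complete metrisability gives a ball \(x_0 B_\rho(\delta_1)\subseteq F_{k_0}\), the homomorphism identity transports the bound on differences back to a neighbourhood of \(e_M\), and the continuity of the single map \(u_{k_0}\) plus the finitely many \(u_n\) with \(n<k_0\) finishes the job. The one caveat is the step you yourself flag: the telescoping identity for \(u_n(g)u_m(g)^{-1}\) uses commutativity of \(H\) (equivalently, conjugation-invariance of \(q\)), whereas the lemma as stated allows an arbitrary metric group \(H\). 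In the non-Abelian case one gets instead \(u_n(g)u_m(g)^{-1}=u_n(x_0)^{-1}\bigl(u_n(x_0g)u_m(x_0g)^{-1}\bigr)\bigl(u_m(x_0)u_n(x_0)^{-1}\bigr)u_n(x_0)\), and Pettis controls the conjugation by exploiting that the set \(\{u_n(x_0)\}_n\) is relatively compact (a convergent sequence), so that a neighbourhood \(V\) can be chosen with \(c^{-1}V\bar V^{-1}c\) small uniformly over that set. Since every application of the lemma in this paper takes \(H\) Abelian (indeed \(H=\TTT\)), your shortened version fully suffices here; just be aware that your proof establishes a slightly narrower statement than the one quoted.
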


As we will see in the proof, the following result is a consequence of \LEM{pet}.
It is likely this result is known.

\begin{pro}{equi}
Let \((G,\cdot,\rho)\) be a completely metrisable metric group and
\((H,\cdot,q)\) be any metric group. If a set \(L \subset \HOM{G}{H}\) is
compact in the pointwise convergence topology, then it is equicontinuous; that
is, for any \(\epsi > 0\) there is \(\delta > 0\) such that for any \(x \in G\):
\[\rho(x) \leq \delta \implies q(f(x)) \leq \epsi \UP{ for any } f \in L.\]
\end{pro}
\begin{proof}
Assume, on the contrary, there exists a constant \(\epsi > 0\) and two sequences
\(x_1,x_2,\ldots \in G\) and \(u_1,u_2,\ldots \in L\) such that
\begin{equation}\label{eqn:aux8}
\lim_{n\to\infty} \rho(x_n) = 0 \qquad \UP{and} \qquad q(u_n(x_n)) \geq \epsi\
(n > 0).
\end{equation}
For each \(n > 0\) the set \(\{f(x_n)\dd\ f \in L\}\) is compact in \(H\). So,
after passing to a (usual) subsequence, we may and do assume that
\begin{itemize}
\item[(\(\ddag\))] the sequence \((u_n(x_k))_{n=1}^{\infty}\) converges in \(H\)
 for all \(k > 0\).
\end{itemize}
Denote by \(M\) the closure in \(G\) of the subgroup generated by all \(x_k\).
Since the set \(\{f\restriction{M}\dd\ f \in L\}\) is compact in the pointwise
convergence topology of \HOM{M}{H}, we may also (and do) assume that \(G = M\).
But then the compactness of \(L\) combined with (\ddag) implies that the closure
\(K\) of \(U \df \{u_n\dd\ n > 0\}\) in the topology of \(L\) contains at most
one element from outside the set \(U\). So, \(K\) is a countable compact
Hausdorff space and therefore it is metrisable. Hence, passing to another
subsequence, we may assume that the homomorphisms \(u_1,u_2,\ldots\) converge
pointwise. But then \LEM{pet} applies and its assertion contradicts
\eqref{eqn:aux8}.
\end{proof}

As an immediate consequence, we obtain the following result that is of great
importance for us.

\begin{thm}{comp-bdd}
If \(p\) is a complete \NORM{} on an Abelian group \(G\), then each weak*
compact subset of \(G'\) is \(p'\)-bounded. In particular, all weak* compact
sets in \(G''\) are \(p''\)-bounded.
\end{thm}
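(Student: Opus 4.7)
The plan is to derive both assertions almost immediately from \PRO{equi} together with the Gleason-type estimate hidden in the proof of \LEM{key}. Let $L\subset G'$ be weak* (that is, pointwise) compact. Because $p$ is complete, $(G,+,p)$ is a completely metrisable metric group, so \PRO{equi} applies with target $(\TTT,\cdot,\lambda)$: the family $L$ is equicontinuous, meaning that for every $\epsi>0$ there exists $\delta>0$ with
\[p(x)\le\delta\ \Longrightarrow\ \lambda(\chi(x))\le\epsi\quad(\chi\in L).\]

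The next step is to convert this uniform local smallness into a uniform bound on the Lipschitz constants $p'(\chi)$. Fixing $\epsi\in(0,\tfrac14)$ and the corresponding $\delta>0$, I would simply rerun the calculation from the proof of \LEM{key}: for $x\notin B_p(\delta)$ one has $\lambda(\chi(x))\le\tfrac12\le p(x)/(2\delta)$, while for $x\in B_p(\delta)\setminus\{e_G\}$ the greatest integer $N\ge 1$ with $Np(x)<\delta$ satisfies $jx\in B_p(\delta)$ for $j=1,\ldots,N$, and the subadditivity property $(\star)$ established there yields $\lambda(\chi(x))\le 4M\,p(x)/(\epsi\delta)$, where $M>1$ is the constant from \eqref{eqn:key} attached to $\lambda$. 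The resulting bound $4M/(\epsi\delta)$ is independent of $\chi\in L$, so $L$ is $p'$-bounded.

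For the ``in particular'' clause I would simply feed the first part back into itself: by part~(A) of \THM{dual-topo} the dual \NORM{} $p'$ is automatically complete on $G'$, so $(G',\cdot,p')$ verifies the hypothesis of the theorem, and its own weak* compact subsets---that is, the pointwise-compact subsets of $G''=(G')'$---are therefore $p''$-bounded.

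I do not anticipate a genuine obstacle. The two nontrivial ingredients---a Pettis/Baire-type equicontinuity statement and the passage from local smallness of a character into $\TTT$ to a global Lipschitz constant---have already been packaged, respectively, into \PRO{equi} and \LEM{key}. The only point demanding a little care is the observation that \PRO{equi} delivers a \emph{single} $\delta$ that works simultaneously for every $\chi\in L$, which is precisely what allows the \LEM{key} argument to be applied uniformly across the whole family $L$.
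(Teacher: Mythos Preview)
Your proposal is correct and follows essentially the same route as the paper: apply \PRO{equi} to the weak*-compact set $L$ to obtain a single $\delta$ with $\lambda(\chi(x))<\epsi$ whenever $p(x)\le\delta$, then rerun the estimate from the proof of \LEM{key} to turn this into a uniform bound on $p'(\chi)$ (the paper chooses $\epsi=\tfrac14$ and records the constant $16M/\delta$). Your justification of the ``in particular'' clause via part~(A) of \THM{dual-topo} is exactly the reason behind the paper's one-line remark that it suffices to prove the first statement.
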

\begin{proof}
It is sufficient to show only the first statement. If \(K \subset G'\) is
compact in the weak* topology, then it is compact in the pointwise convergence
topology of \HOM{G}{\TTT}, and hence, by \PRO{equi}, there is \(\delta > 0\)
such that if \(p(x) \leq \delta\), then \(\lambda(\chi(x)) < 1/4\) for all
\(\chi \in K\). But then \(p'(\chi) \leq \frac{16M}{\delta}\) where \(M\) is
chosen so that \eqref{eqn:key} holds for \(q = \lambda\), which follows from
the proof of \LEM{key}.
\end{proof}

\begin{cor}{reflex-bdd}
Let \((G,+,p)\) be a reflexive metric group.
\begin{enumerate}[\upshape(A)]
\item Each weakly compact subset of \(G\) is \(p\)-bounded.
\item The topology of \((G',\cdot,p')\) is the topology of uniform convergence
 on weakly compact subsets of \(G\).
\end{enumerate}
\end{cor}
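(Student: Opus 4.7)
The plan is to deduce both parts from \THM{comp-bdd} combined with the properties of reflexive \NORM[s] already recorded in \COR{reflex-WK}.

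For (A), I would exploit the fact that, because $p$ is reflexive, $\kappa_G$ is a bijective isometry from $(G,+,p)$ onto $(G'',\cdot,p'')$. Straight from the defining formula $\eE_x(\chi) = \chi(x)$, this map intertwines the weak topology on $G$ with the weak* topology on $G''$: a net $x_\sigma \to x$ weakly in $G$ means $\chi(x_\sigma) \to \chi(x)$ in $\TTT$ for every $\chi \in G'$, which is precisely $\eE_{x_\sigma} \to \eE_x$ pointwise on $G'$. Consequently, a weakly compact set $K \subset G$ is sent to a weak* compact set $\kappa_G(K) \subset G''$. Now $p'$ is complete by part (A) of \THM{dual-topo}, so \THM{comp-bdd} applied to the metric group $(G',\cdot,p')$ asserts that every weak* compact subset of $(G')' = G''$ is $p''$-bounded. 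In particular $\kappa_G(K)$ is $p''$-bounded, and since $\kappa_G$ is an isometry, $K$ itself is $p$-bounded.

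For (B), let $\tau_B$ and $\tau_W$ denote the topologies on $G'$ of uniform convergence on, respectively, $p$-bounded and weakly compact subsets of $G$. Part (B) of \THM{dual-topo} already identifies the $p'$-topology on $G'$ with $\tau_B$, so it suffices to show $\tau_B = \tau_W$. The inclusion $\tau_W \subseteq \tau_B$ is immediate from part (A), since every weakly compact set is $p$-bounded. For the reverse inclusion, part (C) of \COR{reflex-WK} gives that each closed $p$-ball $\bar{B}_p(r)$ is weakly compact, and every $p$-bounded set sits inside some such ball; hence a basic $\tau_B$-neighbourhood $\{\chi \dd \sup_{x \in B} \lambda(\chi(x)) < \epsi\}$ contains $\{\chi \dd \sup_{x \in \bar{B}_p(r)} \lambda(\chi(x)) < \epsi\}$ and is therefore a $\tau_W$-neighbourhood.

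No genuine obstacle is expected: the only conceptual twist is to apply \THM{comp-bdd} to the \emph{dual} group $(G',\cdot,p')$ rather than to $(G,+,p)$ itself, and then to use the reflexivity of $p$ to transport the resulting boundedness back from $G''$ to $G$ via $\kappa_G$. Both assertions then reduce to bookkeeping with earlier results.
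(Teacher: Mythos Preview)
Your proposal is correct and follows the paper's own route: the paper's proof of (A) simply says to ``repeat the argument used to explain \COR{reflex-WK}'' (i.e.\ transport via $\kappa_G$ between the weak and weak* topologies) and then invokes \THM{comp-bdd}, while (B) is deduced from (A) together with \THM{dual-topo}. Your write-up is more explicit---spelling out the use of \COR{reflex-WK}(C) for the reverse inclusion in (B), and re-deriving the ``in particular'' clause of \THM{comp-bdd}---but the underlying argument is the same.
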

\begin{proof}
To show (A), repeat the argument used to explain \COR{reflex-WK}. Part (B)
follows from (A) and \THM{dual-topo}.
\end{proof}

Another consequence of \THM{comp-bdd} is formulated below.

\begin{pro}{metr-refl-topo}
If \((G,+,p)\) is a reflexive metric group, then it is topologically reflexive.
\end{pro}
\begin{proof}
It follows from \COR{reflex-bdd} that the topology of \(\widehat{G}\) is weaker
than the topology of \(G'\) and from \COR{reflex-WK} that \(p\) is complete.
Thus, \(\HOM{\widehat{G}}{\TTT} \subset \HOM{G'}{\TTT}\). This implies that
\SECD{G} and \(G''\) coincide as sets with binary actions (because of the metric
reflexivity of \(G\)). It is also clear that the function \(G \ni x \mapsto
\eE_x \in \SECD{G}\) is a continuous bijection. To convince ourselves that this
map is actually a homeomorphism, it is sufficient to show that the topologies of
\SECD{G} and of \(G''\) coincide. Since \(p''\) induces the topology of uniform
convergence on weakly compact subsets \(K\) of \(G'\) (by \COR{reflex-bdd}), we
only need to check that each such a set \(K\) is a compact subset of
\(\widehat{G}\) (that is, in the compact-open topology). But this follows from
the Ascoli-type theorem, since:
\begin{itemize}
\item the weak and the weak* topologies of \(G'\) coincide (by the metric
 reflexivity of \(G\));
\item each weak* compact set in \(G'\) is \(p'\)-bounded (thanks to
 \THM{comp-bdd}) and closed in the compact-open topology;
\item all characters from a \(p'\)-bounded subset of \(G'\) have a common
 Lipschitz constant.
\end{itemize}
\end{proof}

We now give

\begin{proof}[Proof of \THM{main-refl}]
Parts (A) and (C) are covered by, respectively, \COR{reflex-bdd} and
\PRO{metr-refl-topo}. We pass to item (B), which is the hardest part of
the result. In what follows, we will denote by \(w(X)\) the topological weight
of a topological space \(X\) and by \(C(X,\CCC)\) the space of all continuous
complex-valued functions on \(X\) equipped with the uniform convergence
topology (we will use this space only for compact \(X\)).\par
The case when \(G\) is finite is covered e.g. by the Pontryagin's duality. Hence
we assume \(G\) is infinite. Since \(G\) is isometric to \(G''\), it is
sufficient to show that
\begin{equation}\label{eqn:aux21}
w(G') \leq w(G).
\end{equation}
To this end, for each positive integer \(n\) equip \(K_n \df \bar{B}_p(n)\) with
the topology induced from the weak one of \(G\), and recall that:
\begin{itemize}
\item \(K_n\) is a compact Hausdorff space (thanks to \COR{reflex-WK});
\item \(w(C(K_n,\CCC)) = w(K_n)\) (by the compactness of \(K_n\));
\item \(G'\) embeds (as a topological space) into \(\prod_{n=1}^{\infty}
 C(K_n,\CCC)\) via a map
 \[\chi \mapsto (\chi\restriction{K_n})_{n=1}^{\infty}\]
 (thanks to part (B) of \THM{dual-topo}).
\end{itemize}
So, to establish \eqref{eqn:aux21} we only need to check that
\begin{equation}\label{eqn:aux22}
w(K_n) \leq w(G)
\end{equation}
for each \(n\). To simplify notation, we fix \(n\) and set \(Q = K_n\). Further,
we equip \(M_m \df \bar{B}_{p'}(m) \subset G'\) with the topology inherited from
the weak* topology of \(G'\). We infer from \THM{dual-topo} that \(M_m\) is
a compact Hausdorff space. Observe that the function
\[\Phi_m\dd Q \ni x \mapsto \eE_x\restriction{M_m} \in C(M_m,\CCC)\]
is continuous if \(C(M_m,\CCC)\) is equipped with the pointwise convergence
topology. In particular, \(E_m \df \Phi_m(Q)\) is compact in that topology. As
it is also a bounded set in \(C(M_m,\CCC)\), it follows from a result due to
Grothendieck \cite{gro} that \(\Phi_m(Q)\) is compact in the weak topology of
the Banach space \(C(M_m,\CCC)\) (here the \textit{weak topology} is understood
as it is in the Banach space theory). In other words, \(E_m\) is an Eberlein
compactum. It is well-known that all such compacta \(E\) satisfy \(\OPN{dens}(E)
= w(E)\) where \(\OPN{dens}(E)\) is the least cardinality among the sizes of
dense sets in \(E\). Since the map
\[Q \ni x \mapsto (\Phi_m(x))_{m=1}^{\infty} \in \prod_{m=1}^{\infty} E_m\]
is one-to-one (so, it is a topological embedding) and countable products of
Eberlein compacta are Eberlein as well, we conclude that \(Q\) is an Eberlein
compactum. Consequently, \(w(Q) = \OPN{dens}(Q)\). Now it is sufficient to take
a set \(D \subset Q\) which is dense in \(Q\) in the topology of \(G\) induced
by the \NORM{} \(p\) and satisfies \(\OPN{card}(D) \leq w(G)\) to conclude that
\(D\) is dense in \(Q\) in the weak topology and therefore \(w(Q) \leq
\OPN{card}(D)\), which finishes the proof of \eqref{eqn:aux22} and of
the theorem.
\end{proof}

\begin{cor}{refl-LCA}
If \(p\) is a non-proper compatible \NORM{} on a metrisable LCA group \(G\),
then \(p\) is not semireflexive.
\end{cor}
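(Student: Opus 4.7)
The plan is to argue by contradiction: assume that \(p\) is semireflexive, and deduce that \(p\) must be proper, against the hypothesis. Since semireflexivity means that \(\kappa_G\dd (G,+,p)\to(G'',+,p'')\) is a topological isomorphism, \(p\) is in particular semiregular. \PRO{regular}(c) then provides a regular \NORM{} \(q\df\reg{p}\) equivalent to \(p\) with \(q'=p'\); consequently \(q''=p''\), and the set-theoretic map \(\kappa_G\) is the same whether computed with respect to \(p\) or to \(q\). Regularity of \(q\) gives \(q(x)=q''(\eE_x)\), so \(\kappa_G\dd(G,+,q)\to(G'',+,q'')\) is isometric; surjectivity of \(\kappa_G\) is inherited from the semireflexivity of \(p\). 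Hence \(q\) is \emph{reflexive}.

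Now I apply \COR{reflex-bdd}(B) to the reflexive metric group \((G,+,q)\): the topology of \((G',\cdot,q')\) coincides with the topology of uniform convergence on weakly compact subsets of \(G\). Since \(G\) is LCA, \THM{glick} identifies the weakly compact subsets of \(G\) with the compact ones, so the topology in question is the compact-open, i.e.\ Pontryagin's, topology on \(G'\). Because \(q'=p'\), this shows that \(p'\) is compatible with Pontryagin's topology of \(G'\), and \THM{comp-dual} then forces \(p\) to be proper~--- a contradiction.

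The main obstacle is the upgrade from semireflexivity of \(p\) to genuine reflexivity of \(\reg{p}\); it hinges on using \PRO{regular}(c) to identify \(p''\) with \(q''\) (and \(\kappa_G\) with the canonical embedding computed via \(q\)), so that the isometry condition \(q(x)=q''(\eE_x)\) from regularity and the surjectivity coming from semireflexivity can be combined. Once this is in place, \COR{reflex-bdd}, Glicksberg's theorem and the characterisation in \THM{comp-dual} combine almost mechanically to yield the result.
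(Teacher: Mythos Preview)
Your proof is correct and relies on the same core ingredients as the paper's argument (the upgrade from semireflexive $p$ to reflexive $\reg{p}$, the description of the dual topology for reflexive groups, Glicksberg's theorem, and the characterisation in \THM{comp-dual}). The route, however, is slightly different. After obtaining reflexivity of $\reg{p}$, the paper first argues that $G$ is weakly $\sigma$-compact via \COR{reflex-WK}, hence $\sigma$-compact by Glicksberg, hence Polish; it then invokes the existence of an auxiliary proper reflexive compatible \NORM{} $q$ on $G$ (guaranteed by \THM{comp-dual}) and uses \THM{main-refl} to conclude that $(\reg{p})'=p'$ and $q'$ induce the same topology, whence $p'$ is Pontryagin-compatible. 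You bypass both the Polish step and the auxiliary $q$: applying Glicksberg directly to identify weakly compact subsets of $G$ with compact ones, you read off the Pontryagin-compatibility of $(\reg{p})'=p'$ immediately from \COR{reflex-bdd}(B). This is a more direct path to the same conclusion; the paper's detour through Polishness is not actually needed once one observes that \COR{reflex-bdd}(B) plus Glicksberg already pin down the dual topology as the compact-open one.
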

\begin{proof}
Assume \(p\) is semireflexive. Then \(\reg{p}\) is compatible and reflexive. So,
\(G\) is weakly \(\sigma\)-compact, thanks to \COR{reflex-WK}, and actually
\(G\) is \(\sigma\)-compact, by \THM{glick}. Consequently, \(G\) is Polish, and
admits a reflexive proper compatible \NORM{} \(q\) (see \THM{comp-dual}). Now it
follows from \THM{main-refl} that \(p'\) and \(q'\) are equivalent and thus, by
\THM{comp-dual}, \(p\) is proper.
\end{proof}

\begin{cor}{bdd-refl}
If \(p\) is a bounded \NORM{} on an Abelian group \((G,+)\), then either
\((G,+,p)\) is compact or \(p\) is not semireflexive.
\end{cor}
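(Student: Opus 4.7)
My plan is to assume $p$ is semireflexive and derive that $(G,+,p)$ is compact; the dichotomy then follows by contraposition. Up to replacing $p$ by $\reg{p}$ --- which is reflexive, equivalent to $p$, and still bounded by the same constant $M$ --- I may assume $p$ itself is reflexive. The strategy unfolds in four stages: first, show that the dual $(G',p')$ carries the discrete topology; second, deduce from \COR{refl-LCA} applied to this discrete LCA group that $p'$ is proper; third, apply \THM{comp-dual} to $(G',p')$ in order to identify the topology of $(G'',p'')$ with the compact Pontryagin topology of the dual of a discrete group; and finally, transport compactness back to $(G,+,p)$ through the topological isomorphism $\kappa_G$ supplied by reflexivity.

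The only genuinely new ingredient is the discreteness of $(G',p')$. For a fixed $\chi\in G'\setminus\{1\}$ I choose $x_0\in G$ with $\chi(x_0)\neq 1$ and inspect the cyclic subgroup of $\TTT$ generated by $\chi(x_0)$. If this subgroup is finite of some order $n\geq 2$, a direct check on cyclic subgroups of $\TTT$ shows that one of its nontrivial elements has $\lambda$-value at least $\frac{1}{3}$; if the subgroup is infinite, it is dense in $\TTT$, so the supremum of $\lambda$ over its elements equals $\frac{1}{2}$. In either case some integer $k$ satisfies $\lambda(\chi(kx_0))=\lambda(\chi(x_0)^k)\geq\frac{1}{3}$. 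Since $kx_0\in G$ and $p(kx_0)\leq M$, this yields $p'(\chi)\geq \lambda(\chi(kx_0))/p(kx_0)\geq \frac{1}{3M}$, so the open $p'$-ball of radius $\frac{1}{3M}$ around the identity reduces to $\{1\}$ and the $p'$-topology on $G'$ is discrete. This is precisely the bounded-\NORM{} analogue of the estimate in \PRO{compact}(b).

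With $(G',p')$ now exhibited as a discrete (hence metrisable LCA) group on which $p'$ is compatible, reflexivity of $(G,p)$ promotes $p'$ to a reflexive \NORM{} on $G'$ via \COR{reflex-WK}(D); in particular, $p'$ is semireflexive. The contrapositive of \COR{refl-LCA} applied to $(G',p')$ then forces $p'$ to be proper. \THM{comp-dual}, now applied to $(G',p')$, tells me that $p''$ is compatible with the Pontryagin topology of $\HOM{G'}{\TTT}$; and since the Pontryagin dual of a discrete group is compact, $(G'',p'')$ is compact. Reflexivity of $p$ makes $\kappa_G\dd(G,p)\to(G'',p'')$ a topological isomorphism, so compactness transfers to $(G,+,p)$. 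The only step demanding real work is the lower bound $p'(\chi)\geq\frac{1}{3M}$; every other step repackages results already established in Sections~2 and~3.
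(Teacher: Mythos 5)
Your proposal is correct and follows essentially the same route as the paper's proof: boundedness of \(p\) forces \(p'\) to induce the discrete topology on \(G'\) (your explicit \(\frac{1}{3M}\) lower bound is the same estimate the paper invokes from \PRO{compact}(b)), then \COR{refl-LCA} gives properness of the reflexive norm \(p'\), \THM{comp-dual} makes \(G''\) the compact Pontryagin dual of a discrete group, and \(\kappa_G\) transports compactness back. The only cosmetic difference is your upfront reduction from semireflexive \(p\) to reflexive \(\reg{p}\), which the paper handles implicitly.
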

\begin{proof}
Since all compatible \NORM[s] on metrisable compact Abelian groups are
semi-reflexive, it suffices to show the compactness of \(G''\) for
semi-reflexive \(p\) (as in that case \(G\) is topologically isomorphic to
\(G''\)). It readily follows from the boundedness of \(p\) that \(p'\) induces
the discrete topology on \(G'\). In particular, \(p'\) is a reflexive compatible
\NORM{} on a LCA group. So, \COR{refl-LCA} implies that \(p'\) is proper and
consequently \(G''\) is compact (since \(G'\) is discrete; cf. \THM{comp-dual}).
\end{proof}

\begin{rem}{topo-metr-refl}
Let us call a metrisable Abelian group \emph{reflexifiable} if it admits
a compatible \NORM{} that is reflexive. It seems to be an interesting question
of when a metrisable Abelian group is reflexifiable. \THM{main-refl} provides us
some insight into this issue. Firstly, each such a group \(G\) needs to be
complete and topologically reflexive. Secondly, if we endow \(G\) and \(G^* \df
\HOM{G}{\TTT}\) with the weak and weak* topologies (respectively), both these
topological groups need to be \emph{hemicompact} (a topological Hausdorff space
\(X\) is hemicompact if it may be covered by a countable family of its compact
subsets such that any compact set in \(X\) is contained in some member of that
family.) Moreover, the given topology of \(G\) needs to coincide with
the topology of uniform convergence on weak* compact subsets of \(G^*\).
Finally, denoting by \(G'\) the group \(G^*\) endowed with topology of uniform
convergence on weakly compact subsets of \(G\), each continuous character on
\(G'\) needs to have the form \(\eE_x\) for some \(x \in G\). This last property
seems to be the hardest part of the above list. (Note that \(G'\) is metrisable
if \(G\) is weakly hemicompact.)\par
As it is easily seen, the product of a finite number of reflexifiable
topological Abelian groups is reflexifiable as well.
\end{rem}

\begin{exm}{reflexif}
Reflexive Banach spaces, when considered as topological Abelian groups, can be
characterised as reflexifiable groups that are both connected and locally
connected and whose fundamental groups are trivial. Instead of giving a detailed
proof, here we list its main steps. Below \(G\) denotes a metrisable Abelian
group with all the properties specified above, \(p\) is a reflexive compatible
\NORM{} on \(G\) and \((\RRR,+)\) is endowed with the natural \NORM.
\begin{enumerate}[(ST1)]
\item Being completety metrisable, \(G\) is both arcwise and locally arcwise
 connected.
\item Each character on \(G\) is of the form \(q \circ \phi\) where \(\phi \in
 G^* \df \HOM{G}{\RRR}\) and \(q\dd \RRR \ni t \mapsto e^{2\pi it} \in \TTT\).
 The assignment \(G' \ni \phi \mapsto q \circ \phi \in G^*\) is a bijective
 homomorphism of abstract groups. Denote the inverse assignment by \(\chi
 \mapsto \chi_{\RRR}\). So, for \(\chi \in G'\), \(\chi_{\RRR} \in G^*\)
 satisfies \(q \circ \chi_{\RRR} = \chi\). Endow \(G^*\) with a topology such
 that the mapping \(G' \ni \chi \mapsto \chi_{\RRR} \in G^*\) is
 a homeomorphism. In this way \(G^*\) is a completely metrisable Abelian group.
 Note also that \(G_{\RRR}\) is a real vector space (with pointwise actions).
\item Each \(\phi \in G^*\) is bounded on \(\bar{B}_p(R)\) for all \(R > 0\)
 (by the weak compactness of closed \(p\)-balls and \THM{glick}).
\item If \(\phi_1,\phi_2,\ldots \in G^*\) converge to \(0\) uniformly on
 \(p\)-bounded subsets of \(G\), then these homomorphisms converge to \(0\) in
 the topology of \(G^*\).
\item For any \(\phi \in G^*\), the mapping \(\RRR \ni t \mapsto t \phi \in
 G^*\) is continuous.
\item The homomorphism \(G^* \ni \phi \mapsto \frac12 \phi \in G^*\) is
 continous.
\item \(G^*\) is a topological vector space. (This follows only from (ST5),
 (ST6) and complete metrisability of \(G^*\). It is likely that such a general
 theorem on topological Abelian groups is known.)
\item The topology of \(G^*\) coincides with the topology of uniform convergence
 on \(p\)-bounded subsets of \(G\).
\item \(G^*\) is a completely metrisable locally convex space.
\item \(G\) is a completely metrisable locally convex space as well. (Since we
 can apply all the previous steps to \(G'\) which is isomorphic to \(G^*\) to
 conclude analogous properties of \(G''\) which is isomorphic to \(G\).)
\item All closed \(p\)-balls in \(G\) are compact in the weak topology of
 a locally convex space.
\item Considered as a locally convex space, \(G\) is normable and a reflexive
 Banach space. (Since \(G\) contains a weakly compact neighbourhood of \(0\).)
\end{enumerate}
\end{exm}

\begin{rem}{functoreflex}
In \REM{functor} we have given an example of a continuous homomorphism from
a complete metric Abelian group into a reflexive metric Abelian group whose dual
homomorphism is discontinuous. If we assume, in addition, that the source group
(that is, the domain) is reflexive as well, the aforementioned phenomenon does
not occur, as all continuous homomorphisms between reflexive groups are
automatically bounded, which easily follows from the weak compactness of closed
balls (in such groups) and from the boundedness of weakly compact sets in such
groups.
\end{rem}

\section{Duality and regularisation for quasi-\NORM[s]}\label{sec:quasi}

In this section we propose a more general (and more abstract) approach to
duality in Abelian groups, where no topological structures are considered
(in particular, all groups are endowed with no topologies). These ideas will be
applied in the next section to a non-metrisable groups.\par
From now on to the end of this section, all groups are Abelian and endowed with
no topologies, and we use multiplicative notation for their actions. For any
such a group \(G\), \(G^{\#}\) stands for the group of all homomorphisms from
\(G\) into \(\TTT\). Additionally, we set \(G^{\#\#} \df (G^{\#})^{\#}\) and
introduce \emph{canonical embedding} \(J_G\dd G \to G^{\#\#}\) given by
a standard rule: \(J_G(x) = \eE_x\) where \(\eE_x(\chi) = \chi(x)\) (where \(x
\in G\) and \(\chi \in G^{\#}\)). (Recall that \(J_G\) is always one-to-one and
it is surjective iff \(G\) is finite.) Further, for any set \(A \in G\) we use
\(A^{\perp}\) to denote the set of all \(\phi \in G^{\#}\) such that
\(\phi\restriction{A} \equiv 1\).

\begin{dfn}{quasinorm}
A \emph{quasi-\NORM} on a group \(G\) is any function \(p\dd G \to [0,\infty]\)
such that:
\begin{itemize}
\item \(p(e_G) = 0\);
\item \(p(x^{-1}) = p(x)\) for all \(x \in G\);
\item \(p(xy) \leq p(x)+p(y)\) for any \(x, y \in G\).
\end{itemize}
For any quasi-\NORM{} \(p\) on \(G\) we define:
\begin{itemize}
\item its \emph{dual} quasi-\NORM{} \(p^{\#}\) on \(G^{\#}\) by:
 \[p^{\#}(\chi) = \sup \left\{\frac{\lambda(\chi(g))}{p(g)}\dd\ g \in G,\
 \chi(g) \neq 1\right\} \qquad (\chi \in G^{\#}),\]
 under the following conventions: \(\sup(\varempty) \df 0\), \(\frac{a}{0} \df
 \infty\) and \(\frac{a}{\infty} \df 0\) for all \(a \in (0,\infty)\);
\item \(p^{\#\#} \df (p^{\#})^{\#}\) (a quasi-\NORM{} on \(G^{\#\#}\));
\item \emph{regularisation} \(\reg{p}\) of \(p\) (a quasi-\NORM{} on \(G\)) by:
 \(\reg{p}(x) = p^{\#\#}(\eE_x)\ (x \in G)\).
\end{itemize}
The quasi-\NORM{} \(p\) is said to be \emph{regular} if \(p = \reg{p}\). We use
\(\REG(G)\) to denote the set of all regular quasi-\NORM[s] on \(G\).
Additionally, \(\ker(p)\) and \(\fin(p)\) will stand for the sets, respectively,
\(p^{-1}(\{0\})\) and \(p^{-1}(\RRR)\). Note that \(\ker(p)\) and \(\fin(p)\)
are subgroups of \(G\). Further, for each real \(r > 0\) we use \(B_p(r)\) and
\(\bar{B}_p(r)\) to denote the sets of all \(g \in G\) such that \(p(g) < r\)
and \(p(g) \leq r\), respectively. 
\end{dfn}

Basic properties of the notions introduced above are collected below.

\begin{pro}{basic}
For any quasi-\NORM[s] \(p\) and \(q\) on \(G\):
\begin{enumerate}[\upshape(a)]
\item if \(p \leq q\), then \(\reg{p} \leq \reg{q}\) and \(p^{\#} \geq q^{\#}\);
\item if \(p, q \in \REG(G)\), then \(p \leq q\) iff \(p^{\#} \geq q^{\#}\);
\item if \(p \in \REG(G)\) and \(p^{\#} = q^{\#}\), then \(p \leq q\); in
 particular, \(\reg{p} \leq p\);
\item \(\reg{p} \in \REG(G)\) and \((\reg{p})^{\#} = p^{\#}\);
\item \(p^{\#} \in \REG(G^{\#})\);
\item for any quasi-\NORM{} \(\tau\) on \(G^{\#}\), the quasi-\NORM{}
 \(\sigma\dd G \to [0,\infty]\) given by \(\sigma(x) = \tau^{\#}(\eE_x)\) is
 regular;
\item \(\ker(p^{\#}) = \fin(p)^{\perp}\) and \(\fin(p^{\#})\) consists of all
 homomorphisms \(\phi \in G^{\#}\) that are continuous w.r.t. \(p\); that is,
 \(\phi \in \fin(p^{\#})\) iff \(\lim_{n\to\infty} \phi(g_n) = 1\) for any
 sequence \((g_n)_{n=1}^{\infty} \in G\) such that \(\lim_{n\to\infty} p(g_n) =
 0\) \UP(in particular, \(\ker(p) \subset \ker(\phi)\) for all \(\phi \in
 \fin(p^{\#})\)\UP).
\end{enumerate}
\end{pro}
\begin{proof}
First of all, notice that for all \(x \neq e_G\):
\begin{equation}\label{eqn:qv-reg}
\reg{p}(x) = \sup_{\chi \in G^{\#} \setminus \{x\}^{\perp}}
\left(\inf_{g \notin \ker(\chi)}
\frac{\lambda(\chi(x))}{\lambda(\chi(g))} p(g)\right)
\end{equation}
(cf. \PRO{regular}), which easily implies part (a) and that \(\reg{p} \leq p\).
Note also that (b) follows from (a), since regular \(p\) is determined by
\(p^{\#\#}\), and that the former part of (c) is implied by its latter part and
by (b).\par
Further, observe that for any \(\epsi > 0\),
\[\lambda(\chi(g)) \leq (p(g)+\epsi)(p^{\#}(\chi)+\epsi) \qquad (g \in G,\ \chi
\in G^{\#})\]
(with natural conventions that \(\infty+a = \infty \cdot a = a \cdot \infty =
\infty \cdot \infty = \infty\) for any \(a \in (0,\infty)\)) and
\(p^{\#}(\chi)\) is the least possible value from \([0,\infty]\) for which
the above inequality holds for all \(g \in G\) (with fixed \(\chi\)). Using this
characterisation and proceeding similarly as in the proof of \PRO{regular} one
gets (e) and that \((\reg{p})^{\#} = p^{\#}\). Since the first claim of (d) is
a special case of (f), it remains to show (f) and (g). Instead of giving
a detailed proof of (f), we refer the reader to \THM{sub-reg} (stated below)
which combined with (e) yields (f). Finally, the former claim of (g) is left to
the reader, whereas the latter one follows from the proof of \LEM{key}.
\end{proof}

As a conseqence of the above result, we obtain the following two important
properties of regular quasi-\NORM[s].

\begin{thm}{sup-reg}
Let \(\{p_s\}_{s \in S}\) be an arbitrary non-empty collection of regular
quasi-\NORM[s] on a group \(G\). Then its pointwise supremum---that is,
the function \(\bigvee_{s \in S} p_s\) given by the formula
\[(\bigvee_{s \in S} p_s)(g) = \sup_{s \in S} p_s(g) \qquad (g \in G)\]
is a regular quasi-\NORM{} as well.
\end{thm}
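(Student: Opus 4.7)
The plan is very short because the two key lemmas needed are already built into \PRO{basic}. First I would verify the easy statement that \(p \df \bigvee_{s \in S} p_s\) really is a quasi-\NORM{} on \(G\): the conditions \(p(e_G) = 0\) and \(p(x^{-1}) = p(x)\) pass trivially through pointwise suprema, while the triangle inequality follows from
\[p(xy) = \sup_{s \in S} p_s(xy) \leq \sup_{s \in S}\bigl(p_s(x) + p_s(y)\bigr) \leq p(x) + p(y),\]
using the natural conventions on \([0,\infty]\) listed in \DEF{quasinorm}.

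The substance is then showing that \(\reg{p} = p\). One direction, namely \(\reg{p} \leq p\), is already contained in \PRO{basic}(c). For the reverse inequality I would argue pointwise, exploiting monotonicity of regularisation: since \(p_s \leq p\) for every \(s \in S\), \PRO{basic}(a) gives \(\reg{p_s} \leq \reg{p}\). Because each \(p_s\) is assumed regular, \(\reg{p_s} = p_s\), so \(p_s \leq \reg{p}\) for every \(s\). Taking the supremum over \(s\) yields \(p \leq \reg{p}\), which combined with the previous inequality finishes the proof.

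There is no serious obstacle here; the statement is essentially saying that the operator \(q \mapsto \reg{q}\) is an idempotent monotone contraction on the lattice of quasi-\NORM[s], and the regular quasi-\NORM[s] are precisely its fixed points, which is automatically closed under arbitrary pointwise suprema. The only point that requires a modest amount of care is making sure the quasi-\NORM{} axioms continue to hold when values \(+\infty\) are permitted, but this is handled by the explicit arithmetic conventions already introduced in \DEF{quasinorm}. (I would also note in passing that, together with the trivial bottom element \(p \equiv 0\), this result makes \(\REG(G)\) a complete lattice, with the meet of a family computed as the regularisation of the pointwise infimum; this is presumably what the statement ``a complete lattice'' in the introduction refers to.)
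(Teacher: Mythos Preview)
Your proof is correct and follows essentially the same line as the paper's: check that the pointwise supremum is a quasi-\NORM, use \(\reg{p} \leq p\), and then use monotonicity of regularisation together with \(p_s = \reg{p_s}\) to obtain the reverse inequality. One small side remark: your parenthetical description of the meet in \(\REG(G)\) as ``the regularisation of the pointwise infimum'' is not quite right, since the pointwise infimum of quasi-\NORM[s] need not be a quasi-\NORM{} at all; the paper instead computes the meet by dualising, taking the supremum of the duals, and pulling back via \(J_G\).
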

\begin{proof}
For simplicity, put \(q \df \bigvee_{s \in S} p_s\). It is clear that \(q\) is
a quasi-\NORM. We have already known that \(\reg{q} \leq q\). On the other hand,
for any \(s \in S\) we have \(p_s = \reg{(p_s)} \leq \reg{q}\) (since \(p_s \leq
q\)) and consequently \(q \leq \reg{q}\), which finishes the proof.
\end{proof}

\begin{pro}{prod-reg}
Let \(p_s\) be a regular quasi-\NORM{} on a group \(G_s\) \UP(for \(s \in S \neq
\varempty\)\UP). Then the quasi-\NORM{} \(q\dd \prod_{s \in S} G_s \to
[0,\infty]\) given by
\[q((g_s)_{s \in S}) \df \sup_{s \in S} p_s(g_s) \qquad ((g_s)_{s \in S} \in
\prod_{s \in S} G_s)\]
is regular as well.
\end{pro}
\begin{proof}
To simplify notation, put \(H_s \df G_s^{\#}\), \(r_s \df p_s^{\#}\) and \(G \df
\prod_{s \in S} G_s\), denote by \(H = \bigoplus_{s \in S} H_s\) the direct
product of all the groups \(H_s\), and define \(\tau\dd H \to [0,\infty]\) and
\(\sigma\dd G \to [0,\infty]\) by the rules: \(\tau((h_s)_{s \in S}) =
\sum_{s \in S} r_s(h_s)\) and
\[\sigma((g_s)_{s \in S}) = \tau^{\#}((J_{G_s}(g_s))_{s \in S}).\]
(Note that \(H^{\#}\) is naturally isomorphic to the full product
\(\prod_{s \in S} H_s^{\#}\) and therefore the above
\((J_{G_s}(g_s))_{s \in S}\) may naturally be considered as an element of
\(H^{\#}\).) As the mapping \(G \ni (g_s)_{s \in S} \mapsto
(J_{G_s}(g_s))_{s \in S} \in H^{\#}\) is a monomorphism, it is sufficient to
check that \(q = \sigma\) (thanks to part (e) of \PRO{basic} and to
\THM{sub-reg} stated below). To this end, observe that
\[\tau^{\#}((\phi_s)_{s \in S}) = \sup_{s \in S} r_s^{\#}(\phi_s) \qquad
((\phi_s)_{s \in S} \in \prod_{s \in S} H_s^{\#})\]
and that \(r_s^{\#}(J_{G_s}(g_s)) = p_s^{\#\#}(J_{G_s}(g_s)) = p_s(g_s)\) for
any \(s \in S\) and \(g_s \in G_s\), since \(p_s\) is regular. These two
observations show that indeed \(\sigma = q\) and finish the proof.
\end{proof}

\begin{rem}{inf-reg}
\THM{sup-reg} shows that for any group \(G\) the set \(\REG(G)\) is order
complete. Since the pointwise minimum of two quasi-\NORM[s] is not
a quasi-\NORM{} in general, a natural question arises of how to find order
infima of non-empty sets of regular quasi-\NORM[s]. With the aid of the concept
of dual quasi-\NORM[s] this issue may be solved in a straightforward way as
follows: for any collection \(\{p_s\}_{s \in S}\) of regular quasi-\NORM[s] on
a group \(G\) define \(q\dd G^{\#} \to [0,\infty]\) and \(\bigwedge_{s \in S}
p_s\dd G \to [0,\infty]\) by \(q(\phi) = \sup_{s \in S} p_s^{\#}(\phi)\) and
\((\bigwedge_{s \in S} p_s)(g) = q^{\#}(J_G(g))\). It follows from part (f) of
\PRO{basic} that \(\bigwedge_{s \in S} p_s\) is a regular quasi-\NORM{} on
\(G\) and from the other parts of that result that it is the infimum of the set
\(\{p_s\}_{s \in S}\) in the poset \(\REG(G)\).\par
It is also worth noting that, in general, regular quasi-\NORM[s] on \(G^{\#}\)
may not be of the form \(p^{\#}\) where \(p\) is a quasi-\NORM{} on \(G\).
However, for any finite group \(G\) the assignment \(p \mapsto p^{\#}\)
establishes a one-to-one correspondence between regular quasi-\NORM[s] \(p\) on
\(G\) and regular ones \(q = p^{\#}\) on \(G^{\#}\).
\end{rem}

\begin{exm}{trivial}
As it may easily be shown, the quasi-\NORM[s] \(\zeroqv{G}, \inftyqv{G}\dd G
\to [0,\infty]\) defined as \(\zeroqv{G}(g) = 0\) and \(\inftyqv{G}(g) =
\infty\) for each \(g \in G \setminus \{e_G\}\) are, respectively, the least and
the greatest elements of the poset \(\REG(G)\). It may also easily be verified
that \(\zeroqv{G}^{\#} = \inftyqv{G^{\#}}\) and \(\inftyqv{G}^{\#} =
\zeroqv{G^{\#}}\).
\end{exm}

Less obvious property of regular quasi-\NORM[s] is formulated in the following

\begin{thm}{sub-reg}
A restriction of a regular quasi-\NORM{} to a subgroup is regular as well. More
precisely, if \(p \in \REG(G)\) and \(H\) is a subgroup of \(G\), then
\(p\restriction{H} \in \REG(H)\).
\end{thm}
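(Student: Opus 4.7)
My plan is to set $q \df p\restriction{H}$ and show $q \leq \reg{q}$ on $H$; the reverse inequality $\reg{q} \leq q$ is already established in the proof of \PRO{basic}. Unwinding the definition, what has to be proved is that for every $h \in H$,
\[
p(h) \;=\; \reg{p}(h) \;=\; p^{\#\#}(\eE_h^G) \;\leq\; q^{\#\#}(\eE_h^H),
\]
where $\eE_h^G$ and $\eE_h^H$ denote the evaluation characters in $G^{\#\#}$ and $H^{\#\#}$ respectively, and where we use the assumption $p \in \REG(G)$.

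The first step is the comparison of the two dual quasi-\NORM[s] on characters. For any $\chi \in H^{\#}$ and any extension $\tilde\chi \in G^{\#}$ of $\chi$ (i.e.\ $\tilde\chi\restriction{H} = \chi$), the supremum defining $q^{\#}(\chi)$ ranges only over $h \in H$, whereas the supremum defining $p^{\#}(\tilde\chi)$ ranges over all of $G$; since $\tilde\chi(h) = \chi(h)$ and $p(h) = q(h)$ for $h \in H$, one obtains $q^{\#}(\chi) \leq p^{\#}(\tilde\chi)$ at once. The second step is to invert this inequality in the fraction $\lambda(\chi(h))/q^{\#}(\chi)$: fixing $h \in H$ and varying over $\tilde\chi \in G^{\#}$ with $\tilde\chi(h)\neq 1$, and setting $\chi \df \tilde\chi\restriction{H}$, we get
\[
\frac{\lambda(\chi(h))}{q^{\#}(\chi)} \;\geq\; \frac{\lambda(\tilde\chi(h))}{p^{\#}(\tilde\chi)},
\]
so taking the supremum over $\tilde\chi \in G^{\#}$ with $\tilde\chi(h) \neq 1$ on the right and over $\chi \in H^{\#}$ with $\chi(h) \neq 1$ on the left yields
\[
q^{\#\#}(\eE_h^H) \;\geq\; p^{\#\#}(\eE_h^G) \;=\; p(h),
\]
which is what we want.

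The key obstacle---and the reason the two suprema really are comparable---is that every character on $H$ must be realisable as the restriction of a character on $G$. This is the classical Hahn--Banach-type extension theorem for characters into $\TTT$, which rests on the \emph{divisibility} (equivalently, injectivity in the category of Abelian groups) of the circle group. Without this extension property, the family of characters of $H$ appearing on the left-hand side could be strictly larger than the family $\{\tilde\chi\restriction{H} : \tilde\chi \in G^{\#}\}$, and the chain of inequalities above would collapse. Once divisibility is invoked, however, the argument is essentially a one-line reduction of the regularity of $q$ to the regularity of $p$, and completes the proof.
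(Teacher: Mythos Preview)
Your Steps~1 and~2 are correct and amount to exactly the same argument as the paper's proof: pick a character $\tilde\chi\in G^{\#}$ witnessing that $\reg{p}(h)$ is large (the paper does this via \eqref{eqn:qv-reg} with a parameter $m<p(u)$), restrict it to $H$, and observe that the restriction $\chi=\tilde\chi\restriction{H}$ satisfies $q^{\#}(\chi)\leq p^{\#}(\tilde\chi)$, hence witnesses that $\reg{q}(h)$ is at least as large. So the core of your proof is right and matches the paper's approach.

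Your final paragraph, however, is mistaken. The divisibility (injectivity) of $\TTT$ is \emph{not} used anywhere in this argument, and the paper's proof makes no appeal to it. You only ever pass from a character $\tilde\chi$ on $G$ to its restriction $\chi$ on $H$, which is trivially in $H^{\#}$; at no point do you need to extend a character from $H$ to $G$. Your stated worry---that without extension ``the family of characters of $H$ appearing on the left-hand side could be strictly larger than $\{\tilde\chi\restriction{H}:\tilde\chi\in G^{\#}\}$''---is backwards: a \emph{larger} index set for the supremum defining $q^{\#\#}(\eE_h^H)$ can only make that supremum \emph{larger}, which is precisely the inequality you want. The chain of inequalities does not collapse; it holds regardless of whether the restriction map $G^{\#}\to H^{\#}$ is surjective. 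You should simply delete that paragraph.
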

\begin{proof}
To simplify notation, set \(q \df \reg{(p\restriction{H})}\). We need to show
that \(q(h) = p(h)\) for any \(h \in H\). We have already known that \(q \leq
p\restriction{H}\). To show the reverse inequality, fix \(u \in H \setminus
\{e_G\}\) and real \(m < p(u)\), and using \eqref{eqn:qv-reg} and the regularity
of \(p\) choose \(\rho \in G^{\#}\) such that \(\rho(u) \neq 1\) and
\(\lambda(\rho(u)) p(g) \geq m \lambda(\rho(g))\) for all \(g \in G\). Then
\(\chi \df \rho\restriction{H}\) belongs to \(H^{\#}\) and satisfies
\[\inf_{h \in H \setminus \ker(\chi)} \frac{\lambda(\chi(u))}{\lambda(\chi(h))}
p(h) \geq m,\] and thus \(q(u) \geq m\) (again by \eqref{eqn:qv-reg}), which
finishes the proof.
\end{proof}

The following result will find applications in the next section.

\begin{cor}{qv-reg}
Let \(p\) be a quasi-\NORM{} on a group \(G\) and \(H\) be an arbitrary subgroup
of \(G^{\#}\) that contains all homomorphisms from \(G\) into \(\TTT\) which are
continuous w.r.t. \(p\). Then, \(q \df p^{\#}\restriction{H}\) is regular and
\(\reg{p}(g) = q^{\#}(\eE_g\restriction{H})\) for all \(g \in G\).
\end{cor}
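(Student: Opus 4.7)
The plan is to dispatch the two assertions separately, with the second being essentially an unfolding of definitions together with the characterisation of continuity from \PRO{basic}(g).

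For the regularity of \(q\), I would simply cite \PRO{basic}(e) to get that \(p^{\#} \in \REG(G^{\#})\) and then invoke \THM{sub-reg} applied to the subgroup \(H \leq G^{\#}\) to conclude \(q = p^{\#}\restriction{H} \in \REG(H)\).

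For the identity \(\reg{p}(g) = q^{\#}(\eE_g\restriction{H})\), I would expand both sides according to \DEF{quasinorm}. By definition, \(\reg{p}(g) = p^{\#\#}(\eE_g)\), and since \(\eE_g(\chi) = \chi(g)\), the definition of the dual quasi-\NORM{} gives
\[
\reg{p}(g) = \sup\left\{\frac{\lambda(\chi(g))}{p^{\#}(\chi)} \dd \chi \in G^{\#},\ \chi(g) \neq 1\right\},
\]
while similarly
\[
q^{\#}(\eE_g\restriction{H}) = \sup\left\{\frac{\lambda(\chi(g))}{p^{\#}(\chi)} \dd \chi \in H,\ \chi(g) \neq 1\right\}.
\]
The inequality \(q^{\#}(\eE_g\restriction{H}) \leq \reg{p}(g)\) is immediate since the second supremum is taken over a smaller set.

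The key point for the reverse inequality is that characters outside \(H\) contribute nothing. Indeed, take any \(\chi \in G^{\#} \setminus H\); by the hypothesis on \(H\), such a \(\chi\) cannot be continuous w.r.t.\ \(p\), so \(\chi \notin \fin(p^{\#})\) by \PRO{basic}(g), which means \(p^{\#}(\chi) = \infty\). The convention \(\frac{a}{\infty} = 0\) then forces the corresponding term in the first supremum to vanish, so restricting the supremum to \(\chi \in H\) does not decrease it. Hence \(\reg{p}(g) \leq q^{\#}(\eE_g\restriction{H})\), completing the proof.

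The only delicate point is making sure the two suprema are literally the same once the \(\chi \notin H\) terms are removed; this is purely a matter of invoking the conventions stated in \DEF{quasinorm} correctly, so I do not expect any real obstacle here.
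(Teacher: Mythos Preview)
Your proposal is correct and follows essentially the same route as the paper: regularity of \(q\) via \PRO{basic}(e) and \THM{sub-reg}, and the identity by unfolding the two suprema and observing (through \PRO{basic}(g) and the convention \(a/\infty=0\)) that characters outside \(H\) contribute nothing. The paper's proof is organised as a single chain of equalities rather than two inequalities, but the underlying argument is identical.
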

\begin{proof}
The former claim is a special case of the previous theorem (thanks to part (e)
of \PRO{basic}). To see the latter, observe that, for \(g \neq e_G\):
\begin{align*}
\reg{p}(g) &= \sup \left\{\frac{\lambda(\phi(g))}{p^{\#}(\phi)}\dd\ \phi \in
G^{\#},\ \phi(g) \neq 1\right\}\\ &=
\sup \left\{\frac{\lambda(\phi(g))}{p^{\#}(\phi)}\dd\ \phi \in G^{\#},\ \phi(g)
\neq 1,\ p^{\#}(\phi) < \infty\right\}\\ &\stackrel{(*)}{=}
\sup \left\{\frac{\lambda(\phi(g))}{q(\phi)}\dd\ \phi \in H,\ \phi(g) \neq
1\right\} = q^{\#}(\eE_g\restriction{H}),
\end{align*}
where the equation denoted by \((*)\) follows from the assumption about \(H\).
\end{proof}

\section{Metric duality for non-metrisable Abelian groups}\label{sec:metr-str}

In this part we apply the results from the previous section to discuss metric
duality for non-metrisable topological Abelian groups. We start from recalling
basic concepts of topologising Abelian groups by families of quasi-\NORM[s].\par
Let \(G\) be an Abelian group. A collection of quasi-\NORM[s] \(\PpP =
\{p_s\}_{s \in S}\) on an Abelian group \(G\) is said to be \emph{separating} if
for any \(g \in G \setminus \{e_G\}\) there is an index \(s \in S\) such that
\(p_s(g) > 0\). If \(\PpP\) is such a family (that is, if \(\PpP\) is
separating), it induces a topology \(\tau_{\PpP}\) on \(G\) which makes \(G\)
a Hausdorff topological group: finite intersections of any translations of sets
\(B_{p_s}(r)\) (where \(s \in S\) and real \(r > 0\) are arbitrary) form a basis
of \(\tau_{\PpP}\). We call this topology \emph{induced by \(\PpP\)}. Observe
that \(\PpP\) consists of continuous quasi-\NORM[s] (w.r.t. \(\tau_{\PpP}\)). In
particular, the groups \(\fin(p_s)\) are all open in \(G\).\par
Conversely, for any (Hausdorff) topological Abelian group \(A\) there is
a separating collection \(\QqQ\) of (continuous) real-valued quasi-\NORM[s] on
\(A\) that induces the given topology of \(A\). (This statement is an immediate
consequence of a classical result due to A.A.~Markov on the existence of
continuous semi\NORM[s] with arbitrarily small balls of radius 1; consult e.g.
\cite[Theorem~3.3.9]{a-t}.)\par
As it is commonly practised, in what follows, we will assume that a collection
\(\PpP\) of quasi-\NORM[s] is \emph{upward directed}; that is, we will assume
that: \[\forall p,q \in \PpP\ \exists r \in \PpP\dd\ p \leq r,\ q \leq r.\]
(Similarly, we will say that \(\PpP\) is \emph{downward directed} if for any
two its members \(p\) and \(q\) there is \(v \in \PpP\) such that \(v \leq p\)
and \(v \leq q\).)\par
As it was done in \DEF{dual-value}, for a topological Abelian group \(G\) we
set \(G' \df \HOM{G}{\TTT}\) and call \(G'\) the \emph{dual} group. (At this
very preliminary stage of our considerations, the group \(G'\) is equipped with
no topology.) For any \textbf{continuous} quasi-\NORM{} \(p\) on \(G\), we use
\(p'\) to denote the quasi-\NORM{} \(p^{\#}\restriction{G'}\) on \(G'\) and call
it \emph{dual} to \(p\). Additionally, \(\reg{p}\) stands for the regularisation
of \(p\) defined in \DEF{quasinorm}, and \(\kappa_G\dd G \to (G')^{\#}\) is
a homomorphism given by the rule: \(\kappa_G(g) \df
\eE_G(g)\restriction{G'}\).\par
As an immediate consequence of \COR{qv-reg}, we obtain

\begin{pro}{qv-dual-top}
For any continuous quasi-\NORM{} \(p\) on \(G\), \(p'\) and \(\reg{p}\) are
regular quasi-\NORM[s]. Moreover, \(\reg{p}(g) = (p')^{\#}(\kappa_G(g))\).
\end{pro}
\begin{proof}
It suffices to observe that each homomorphism taken from \(G^{\#}\) that is
continuous w.r.t. \(p\) belongs to \(G'\) and to apply \COR{qv-reg}.
\end{proof}

To define a metric structure on a non-metrisable (Abelian) group, we make
a crucial observation:

\begin{pro}{metr-str}
For an upward directed separating collection \(\PpP\) of \textbf{regular}
quasi-\NORM[s] on a topological Abelian group \((G,\cdot)\) that induces
the topology of \(G\) \tfcae
\begin{enumerate}[\upshape(i)]
\item the collection \(\PpP' \df \{p'\dd\ p \in \PpP\}\) is both separating
 \UP(on \(G'\)\UP) and upward directed;
\item \(\PpP\) is downward directed and
 \begin{equation}\label{eqn:fin}
 \forall g \in G\ \exists p \in \PpP\dd\ p(g) < \infty;
 \end{equation}
\item \(\PpP'\) is separating, both downward and upward directed and for any
 \(\phi \in G'\) there is \(q \in \PpP'\) such that \(q(\phi) < \infty\).
\end{enumerate}
\end{pro}
\begin{proof}
Of course, (i) follows from (iii). To see that (ii) is implied by (i), for \(p,
q \in \PpP\), using (i), take \(v \in \PpP\) such that \(p' \leq v'\) and \(q'
\leq v'\) and then conclude that \((v')^{\#} \leq (p')^{\#}\) as well as
\((v')^{\#} \leq (q')^{\#}\), from which it follows (thanks to \PRO{qv-dual-top}
and the regularity of \(p, q\) and \(v\)) that \(v \leq p\) and \(v \leq q\).
Further, since each homomorphism from \(G\) into \(\TTT\) that vanishes on
the subgroup \(H\) of \(G\) generated by \(\bigcup_{p \in \PpP} \fin(p)\) is
continuous, and \(\PpP'\) is separating, we infer from part (g) of \PRO{basic}
that \(H = G\). So, for any \(g \in G\) there are \(h_1 \in \fin(p_1),\ldots,h_n
\in \fin(p_n)\) (with suitably chosen \(p_1,\ldots,p_n\) from \(\PpP\)) such
that \(g = h_1 \cdot \ldots \cdot h_n\). Finally, as we have already shown that
\(\PpP\) is downward directed, there exists \(q \in \PpP\) that is pointwise
upper bounded by each of \(p_j\) (\(j=1,\ldots,n\)). So, \(q(g) \leq
\sum_{j=1}^n p_j(h_j) < \infty\), which finishes the proof of (ii).\par
It remains to show that (ii) is followed by (iii). We conclude from part (a) of
\PRO{basic} that \(\PpP'\) is both downward and upward directed (as \(\PpP\) is
so). Further, if \(\phi \in G'\) differs from \(e_{G'}\), we take arbitrary \(g
\in G\) such that \(\phi(g) \neq 1\) and then, using \eqref{eqn:fin}, find \(p
\in \PpP\) with \(p(g) < \infty\). Consequently, \(p'(\phi) \geq
\frac{\lambda(\phi(g))}{p(g)}\) and hence \(\PpP'\) is separating. Finally,
since \(\phi\) is continuous and the family \(\PpP\) is upward directed and
induces the topology of \(G\), there are \(q \in \PpP\) and \(\epsi > 0\) such
that \(\phi(B_q(\epsi)) \subset B_{\lambda}(\frac14)\). But then \(q'(\phi) <
\infty\) (cf. the proof of \LEM{key}) and we are done.
\end{proof}

Although the equivalence expressed in the above result holds (only) for families
\(\PpP\) of regular quasi-\NORM[s], for simplicity we introduce the following

\begin{dfn}{metr-str}
A \emph{metric structure} on an Abelian group \(G\) (equipped with no topology)
is any non-empty collection \(\PpP\) of quasi-\NORM[s] on \(G\) that is
separating and both downward and upward directed, and satisfies condition
\eqref{eqn:fin}.\par
A \emph{metric structure} on a \textbf{topological} Abelian group \(A\) is
a metric structure on \(A\) that induces the topology of \(A\).\par
For any metric structure \(\PpP\) on a topological Abelian group \(G\)
the collection \(\PpP' \df \{p'\dd\ p \in \PpP\}\) is called \emph{dual to
\(\PpP\)}. \(\PpP'\) is a metric structure on \(G'\), thanks to \PRO{metr-str}.
\end{dfn}

\begin{exm}{add-zero}
It may easily be shown that any (Hausdorff) topological Abelian group \(G\)
admits a metric structure. Indeed, we can find a separating collection
\(\PpP_0\) of semi\NORM[s] on \(G\) that induces the topology of \(G\) and then
build a metric structure \(\PpP\) by adding to \(\PpP_0\) the quasi-\NORM[s]
\(\zeroqv{G}\) and \(\max(p_1,\ldots,p_n)\) where \(n > 0\) and \(p_1,\ldots,
p_n \in \PpP_0\) are arbitrary. However, when \(\zeroqv{G}\) belongs to
\(\PpP\), then \(\inftyqv{G'}\) is in \(\PpP'\) and thus \(\PpP'\) induces
the discrete topology on \(G'\).
\end{exm}

Let \(\PpP\) be a metric structure on a topological Abelian group \((G,+)\).
We equip \(G'\) with the topology induced by \(\PpP'\) to obtain a topological
group \((G',\cdot)\), which we will denote by \((G,+,\PpP)' = (G',\cdot,\PpP')\)
and call \emph{metric dual of \(G\) induced by \(\PpP\)}. Applying this
procedure to \((G',\cdot)\) in place of \((G,+)\), we obtain \emph{metric second
dual of \(G\)} (\emph{induced by \(\PpP\)}), to be denoted by \((G,+,\PpP)'' =
(G'',\cdot,\PpP'') = (G',\cdot,\PpP')'\). For simplicity, for each \(p \in
\PpP\) we will write \(p''\) to denote \((p')'\) where the latter quasi-\NORM{}
is computed with respect to the collection \(\PpP'\). It is worth underlying
here that both the topology of \(G'\) as well as the domain of \(p''\) does
depend on the metric structure \(\PpP\) of the topological group \(G\).\par
Continuing the above story, for any \(g \in G\) take \(p \in \PpP\) with \(p(g)
< \infty\) and observe that then \(\lambda(\chi(g)) \leq M p'(\chi)\) for any
\(\chi \in G'\) where \(M\) is an arbitrary real number greater than \(p(g)\)
(we enlarge \(p(g)\) to avoid indeterminate form ``\(0 \cdot \infty\)''). This
shows that \(\eE_g\restriction{G'}\) is continuous and hence we are allowed to
consider \(\kappa_G\) as a homomorphism from \(G\) into \(G''\). Moreover,
\PRO{qv-dual-top} yields that for each \(p \in \PpP\),
\[\reg{p}(g) = p''(\kappa_G(g)) \qquad (g \in G),\]
which is consistent with considerations from Section~2 (cf. \DEF{reg-refl}). In
particular, regularity of \(p\) is equivalent to the statement that \(\kappa_G\)
is \emph{isometric} w.r.t. \(p\) and \(p''\).

\begin{dfn}{metr-str-refl}
A metric structure \(\PpP\) on an Abelian group \((G,+)\) is said to be:
\begin{itemize}
\item \emph{regular} if it consists of regular quasi-\NORM[s];
\item \emph{semireflexive} if \(\kappa_G\dd (G,+,\PpP) \to (G'',\cdot,\PpP'')\)
 is an isomorphism of topological groups;
\item \emph{reflexive} if it is both regular and semireflexive.
\end{itemize}
\end{dfn}

The main goal of this section says that each LCA group admits a reflexive metric
structure such that its dual structure induces the Pontryagin's topology of
the dual group. To prove it, we need a slight generalisation of \THM{comp-dual}
which reads as follows.

\begin{lem}{qv-proper}
For a continuous quasi-\NORM{} \(p\) on a LCA group \(G\) \tfcae
\begin{enumerate}[\upshape(i)]
\item \(p'\) is continuous in the Pontryagin's topology of \(G'\);
\item \(p\) is proper; that is, the sets \(\bar{B}_p(r)\) are compact in \(G\)
 for all finite \(r > 0\).
\end{enumerate}
Moreover, if \(p\) is proper, so are \(p'\) and \(\reg{p}\).
\end{lem}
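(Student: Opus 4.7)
The plan is to mirror the proof of \THM{comp-dual} but to circumvent metrisability by replacing the use of \THM{dual-topo}(C) with a Tychonoff--Glicksberg argument that works directly in the Pontryagin topology on \(G'\).

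For \((ii)\Rightarrow(i)\) the argument should be a straight adaptation of the proof of \LEM{key}. Given \(\epsi>0\) I would fix \(\delta>0\) with \(4M\delta<\epsi\), where \(M\) is the constant from \eqref{eqn:key} for \(\lambda\), and consider the basic Pontryagin neighbourhood \(W\df\{\chi\in G'\dd\lambda(\chi(g))<\delta\text{ for all }g\in K\}\) of \(e_{G'}\), where \(K\df\bar B_p(1/\delta^{2})\) is compact by properness of \(p\). For any \(\chi\in W\) one has \(\chi(B_p(1/\delta^{2}))\subseteq B_{\lambda}(\delta)\), and the quantitative estimate inside the proof of \LEM{key} then produces \(p'(\chi)\le 4M\delta<\epsi\).

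The crux of \((i)\Rightarrow(ii)\), and simultaneously of the moreover clause, is the following general observation: for any continuous quasi-\NORM\ \(q\) on an LCA group \(A\), every ball \(\bar B_{q'}(r)\) is compact in the Pontryagin topology of \(A'\). Indeed, \(\bar B_{q'}(r)\) consists of those \(\chi\in\TTT^{A}\) with \(\lambda(\chi(g))\le r\,q(g)\) for every \(g\in A\), a set closed in the Tychonoff-compact space \(\TTT^{A}\); continuity of \(q\) together with \LEM{key} forces its elements to lie in \(A'\), so the set is compact in the pointwise (that is, weak*) topology on \(A'\). Pontryagin--van~Kampen identifies this pointwise topology on \(A'\) with the weak topology of the LCA group \(A'\), whereupon \THM{glick} upgrades the compactness to the Pontryagin topology of \(A'\).

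To finish, I would apply this observation with \(q=p\) to obtain that \(p'\) is proper, and then with \(q=p'\) (legitimate once (i) is assumed) to obtain that \(p''\) is proper on \((G')'\). Pontryagin--van~Kampen provides a topological isomorphism \(\kappa_G\dd G\to\SECD G\), and \PRO{qv-dual-top} transports \(p''\) to \(\reg p\) along it, so \(\reg p\) is continuous and proper on \(G\). Since \(\reg p\le p\) gives \(\bar B_p(r)\subseteq\bar B_{\reg p}(r)\), and the left-hand side is closed in \(G\) by continuity of \(p\), one concludes that \(\bar B_p(r)\) is compact; this yields (ii) and, simultaneously, the moreover clause. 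The main subtlety, and what I would take the most care with, is the general observation above: the interplay of four topologies on \(A'\) (pointwise, weak*, weak, Pontryagin) collapses on closed \(q'\)-balls because Pontryagin--van~Kampen supplies the weak\,=\,weak* identification and Glicksberg then supplies the coincidence of weak and Pontryagin topologies on compacta, and this is precisely what dispenses with any metrisability hypothesis.
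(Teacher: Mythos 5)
Your argument is correct and has the same overall architecture as the paper's proof: the quantitative estimate extracted from the proof of \LEM{key} gives (ii)\(\Rightarrow\)(i); compactness of closed dual balls gives properness of \(p'\); and bootstrapping to \(p''\) and pulling back along the Pontryagin--van Kampen isomorphism gives properness of \(\reg{p}\), hence of \(p\) via \(\bar{B}_p(r) \subset \bar{B}_{\reg{p}}(r)\). The one genuine divergence is the mechanism for the central compactness claim. You establish that \(\bar{B}_{q'}(r)\) is compact in the Pontryagin topology by combining Tychonoff compactness in \(\TTT^{A}\) with the identification (via Pontryagin--van Kampen) of the pointwise topology on \(A'\) with its weak topology, and then upgrading with Glicksberg's theorem (\THM{glick}) --- which is exactly the route the paper takes in the metrisable case (\THM{comp-dual}). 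The paper's proof of this lemma instead notes that \(\bar{B}_{p'}(r)\) is an equicontinuous family (by continuity of \(p\)) which is closed in the compact-open topology, and applies an Ascoli-type theorem. Both mechanisms are sound; the Ascoli route invokes Pontryagin duality only once (at the final pull-back), while yours uses it twice, but your formulation has the merit of isolating a single unconditional observation (compactness of \(\bar{B}_{q'}(r)\) for \emph{every} continuous quasi-\NORM{} \(q\)) from which both (i)\(\Rightarrow\)(ii) and the ``moreover'' clause fall out at once.
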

\begin{proof}
First assume \(p\) is proper. Similarly as presented in the proof of part (B) of
\THM{dual-topo}, one shows that \(p'\) induces topology of uniform convergence
on \(p\)-bounded subsets of \(G\). Consequently, this topology is weaker than
the compact-open one on \(G'\) (that is, than the Pontryagin's topology of
\(G'\)), which means that (i) holds. Conversely, if \(p'\) is continuous in
the Pontryagin's topology, then for each finite \(r > 0\) the set
\(\bar{B}_{p'}(r)\) is an equicontinuous family and a closed set in this
topology. So, by the Ascoli-type theorem, this set is compact and therefore
\(p'\) is proper. Now applying all that has been already established in this
proof to \(p'\) in place of \(p\), we get that \(p''\) is proper. Consequently,
\(\reg{p}\) and \(p\) are proper as well (as \(\bar{B}_p(r) \subset
\bar{B}_{\reg{p}}(r)\)).
\end{proof}

For simplicity, let us call a metric structure on a LCA group \emph{proper} if
all its quasi-\NORM[s] are proper (consult \LEM{qv-proper}).\par
And now it is time for the main result of the section.

\begin{thm}{LCA-metr-str}
Let \(G\) be a LCA group.
\begin{enumerate}[\upshape(A)]
\item A metric structure \(\PpP\) on \(G\) is proper iff \(\PpP'\) induces
 the Pontryagin's topology of \(G'\).
\item \(G\) admits a metric structure that is both proper and reflexive.
\item The assignment \(\PpP \leftrightarrow \PpP'\) establishes a one-to-one
 correspondence between reflexive proper metric structures \(\PpP\) on \(G\)
 and such structures \(\QqQ = \PpP'\) on \(G'\).
\end{enumerate}
\end{thm}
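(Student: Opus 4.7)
The plan is to prove part (A) first, derive part (C) from it via classical Pontryagin--van Kampen duality, and construct the structure required in part (B) by assembling proper reflexive compatible norms on the open \(\sigma\)-compact subgroups of \(G\).

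For (A), the forward implication rests on \LEM{qv-proper}: if every \(p \in \PpP\) is proper then, by (the proof of) that lemma, each dual quasi-\NORM{} \(p'\) induces on \(G'\) the topology of uniform convergence on \(p\)-bounded (hence relatively compact) subsets of \(G\); since \(\PpP'\) is downward directed, the \(\PpP'\)-topology is \textit{a priori} contained in the Pontryagin topology. For the reverse inclusion I would show that every compact \(K \subset G\) is \(p\)-bounded for some \(p \in \PpP\): each \(\fin(p)\) is open by continuity of \(p\), they cover \(G\) by \eqref{eqn:fin}, so finitely many cover \(K\), and downward directedness of \(\PpP\) produces a single \(v \in \PpP\) with \(K \subseteq \fin(v)\); continuity of \(v\) on the compactum \(K\) then bounds it. The reverse direction of (A) is immediate from \LEM{qv-proper}: if \(\PpP'\) induces the Pontryagin topology then each \(p'\) is Pontryagin-continuous, hence each \(p\) is proper.

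Given (A) and (B), part (C) reduces to bookkeeping. If \(\PpP\) is a reflexive proper metric structure on \(G\), then \(\PpP'\) is a metric structure on \(G'\) by \PRO{metr-str}, proper by \LEM{qv-proper}, and regular by part (e) of \PRO{basic}. Moreover, regularity of each \(p \in \PpP\) combined with the Pontryagin--van Kampen identification of \(G\) with \(G''\) via \(\kappa_G\) (obtained during the proof of semireflexivity in (B)) gives \(\PpP'' = \PpP\) through \(\kappa_G\), so the assignments \(\PpP \mapsto \PpP'\) and \(\QqQ \mapsto \QqQ'\) invert each other, yielding the desired bijection between the reflexive proper metric structures on \(G\) and those on \(G'\).

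The real work is in (B). I would use the fact that every LCA group \(G\) is the directed union of its open \(\sigma\)-compact (equivalently compactly generated) subgroups \(U\); each such \(U\) is Polish LCA, so \THM{comp-dual} supplies a proper reflexive compatible norm \(q_U\) on \(U\). Extending \(q_U\) by \(\infty\) off \(U\) produces a continuous proper regular quasi-\NORM{} \(p_U\) on \(G\) with \(\fin(p_U) = U\); I would then take \(\PpP\) to be the closure of \(\{p_U\}_U\) under finite pointwise suprema, after choosing the norms \(q_U\) coherently (for instance, by restricting chosen norms \(q_W\) to smaller \(U \subseteq W\), which remain regular by \THM{sub-reg}). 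Upward directedness is automatic; regularity is preserved under \(\vee\) by \THM{sup-reg}; properness by \(\bar B_{p \vee q}(r) = \bar B_p(r) \cap \bar B_q(r)\); and the topology of \(G\) is induced because each open subgroup \(U\) inherits its topology from \(q_U\). The main obstacle is downward directedness together with condition \eqref{eqn:fin}: for \(p_U, p_V \in \PpP\) one must exhibit \(p_W \in \PpP\) pointwise dominated by both, which I would handle by placing \(U, V\) inside a common open \(\sigma\)-compact subgroup \(W\) and rescaling \(q_W\) appropriately. Once \(\PpP\) is shown to be a proper regular metric structure, semireflexivity is then free: by (A) applied to \(G\) and to \(G'\), both \(\PpP'\) and \(\PpP''\) induce the Pontryagin topologies on \(G'\) and \(G''\), so Pontryagin--van Kampen identifies \(\kappa_G\) with a topological isomorphism \(G \to G''\).
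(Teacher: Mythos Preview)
Your treatment of (A) is essentially correct and close to the paper's, with one pleasant variation: for the inclusion ``Pontryagin \(\subset\) \(\PpP'\)-topology'' you argue directly that every compact \(K \subset G\) is \(p\)-bounded for some \(p \in \PpP\) (via the open cover \(\{\fin(p)\}\)), whereas the paper instead uses that each \(p'\) is proper to trap a convergent net inside a Pontryagin-compact ball. Both work. Part (C) is the same bookkeeping in both.

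The genuine gap is in (B). Your construction relies on the claim that every open \(\sigma\)-compact subgroup \(U\) of \(G\) is Polish, so that \THM{comp-dual} furnishes a proper reflexive compatible \NORM{} \(q_U\) on \(U\). This is false: a \(\sigma\)-compact LCA group need not be second countable (equivalently metrisable). Take \(G = \TTT^{\kappa}\) for uncountable \(\kappa\): it is compact, hence \(\sigma\)-compact, and its only open subgroup is \(G\) itself, which is not metrisable. In that case there is no proper compatible \NORM{} on \(U\) at all (proper \(+\) compatible forces second countability), so your basic building blocks \(p_U\) do not exist.

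The paper repairs exactly this defect by first quotienting out a small compact subgroup \(K\) so that \(G/K\) is Lie (the Gleason--Yamabe type fact quoted there), and only \emph{then} passing to an open \(\sigma\)-compact subgroup \(P(K,g)\) of the Lie group \(G/K\); such \(P(K,g)\) is automatically second countable, hence Polish, and Struble's theorem applies. The resulting quasi-\NORM[s] \(q_{K,g}\) (pullbacks of proper compatible \NORM[s] on \(P(K,g)\), set to \(\infty\) off the preimage) generate a family that separates points and induces the topology of \(G\). The paper then takes the collection \(\QqQ\) of \emph{all} continuous proper quasi-\NORM[s] on \(G\); downward directedness of \(\QqQ\) is obtained not by ``rescaling'' but by the duality trick \(w(g) = (\max(u',v'))^{\#}(\kappa_G(g))\), which lands back in \(\QqQ\) by \LEM{qv-proper}. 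Finally the reflexive proper structure is \(\PpP \df \QqQ''\). Your endgame (semireflexivity via Pontryagin--van Kampen once properness is in hand) matches the paper, but the construction of the structure itself needs the extra Lie-quotient step.
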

\begin{proof}
In what follows, we consider \(G'\) with the Pontryagin's topology.\par
We start from (A). It follows from \LEM{qv-proper} that if \(\PpP'\) consists of
continuous quasi-\NORM[s], then \(\PpP\) is proper. Conversely, if \(\PpP\) is
proper, all quasi-\NORM[s] in \(\PpP'\) are continuous and proper (by the same
lemma). This implies that the topology of \((G',\cdot,\PpP')\) is weaker than
the Pontryagin's one. So, to finish the proof of (A), we only need to show that
nets in \(G'\) that converge to the neutral element w.r.t. the topology induced
by \(\PpP'\) also converge in the compact-open topology. To this end, fix a net
\((\phi_{\sigma})_{\sigma\in\Sigma} \subset G'\) such that
\begin{equation}\label{eqn:aux31}
\lim_{\sigma\in\Sigma} p'(\phi_{\sigma}) = 0
\end{equation}
for any \(p \in \PpP\), and a quasi-\NORM{} \(q\) from \(\PpP\). It follows from
\eqref{eqn:aux31} that \(q'(\phi_{\sigma}) \leq 1\) for \(\sigma\) sufficiently
large. But then, since \(q'\) is proper, the net
\((\phi_{\sigma})_{\sigma\in\Sigma}\) has a `tail' contained in a compact set in
the compact-open topology of \(G'\) and therefore the separation property of
\(\PpP'\) combined with \eqref{eqn:aux31} yields that this net converges to
the neutral element (in the compact-open topology).\par
Now we pass to (B). Let \(\QqQ\) be the collection of all continuous proper
quasi-\NORM[s] on \(G\). We claim that:
\begin{enumerate}[(Q1)]
\item \(\QqQ\) induces the topology of \(G\); and
\item \(\QqQ\) is a metric structure.
\end{enumerate}
To show (Q1), we argue as follows. For each compact subgroup \(K\) of \(G\) such
that \(S_K \df A/K\) is Lie, denote by \(\pi_K\dd G \to S_K\) the quotient
homomorphism. Further, for any \(g \in G\) there is a \(\sigma\)-compact open
subgroup \(P(K,g)\) of \(S_K\) such that \(\pi_K(g) \in P(K,g)\). Now let
\(\rho_{K,g}\) be any compatible proper \NORM{} on \(P(K,g)\). Then
a quasi-\NORM{} \(q_{K,g}\) on \(G\) given by
\[q_{K,g}(x) = \begin{cases}\rho_{K,g}(\pi_K(x)) & \UP{if } \pi_K(x) \in
P(K,g)\\\infty & \UP{otherwise}\end{cases}\]
belongs to \(\QqQ\). The family of all \(q_{K,g}\) constructed above is
separating and induces the topology of \(G\)---both these properties follow from
the following well-known property of all LCA groups (consult, e.g.,
\cite[Theorem~1.4.19]{tao}):
\begin{quote}\itshape
Each neighbourhood of the neutral element of a LCA group \(A\) contains
a compact subgroup \(K\) such that the quotient group \(A / K\) is Lie.
\end{quote}
(A detailed verification of the property of the family of all \(q_{K,g}\)
postulated above is left to the reader.) So, (Q1) holds. Moreover:
\begin{itemize}
\item for any \(g \neq e_G\) one may find \(K\) suct that \(0 < q_{K,g}(g) <
 \infty\);
\item if \(u, v \in \QqQ\), then \(\max(u,v) \in \QqQ\) as well.
\end{itemize}
Thus, to check (Q2) it is sufficient to show that \(\QqQ\) is downward directed.
To this end, fix \(u,v \in \QqQ\) and define \(z\dd G' \to [0,\infty]\) and
\(w\dd G \to [0,\infty]\) by \(z = \max(u',v')\) and \(w(g) =
z^{\#}(\kappa_G(g))\). It follows from \LEM{qv-proper} that \(z\) is
a quasi-\NORM{} on \(G'\) that is both continuous and proper. So, another
application of that lemma yields that \(w \in \QqQ\). But \(w \leq u\) and
\(w \leq v\), since \(z \leq u'\) and \(\reg{u} \leq u\) and, similarly,
\(z \leq v'\) and \(\reg{v} \leq v\). This concludes the proof of
(Q1)--(Q2).\par
Now to finish the entire proof of (B), it is sufficient to apply part (A) and
\LEM{qv-proper} to conclude that \(\QqQ'\) is both a proper and reflexive metric
structure on \(G'\). Consequently, \(\PpP \df \QqQ''\) is a metric structure we
searched for in (B).\par
Finally, part (C) easily follows from part (A) and from \LEM{qv-proper}.
\end{proof}

\section{Concluding remarks and open problems}\label{sec:fin}

Although we have managed to adopt the concept of metric duality (naturally
present in the theory of Banach spaces) to the context of LCA groups, at this
stage of our investigations we do not know if there exist reflexive metric
Abelian groups that are, in a sense, far from both LCA groups and Banach spaces.
To be more precise, we put the following

\begin{prb}{1}
Find a reflexive metric Abelian group that is Polish and non-iso\-morphic to
the product of a LCA group and a Banach space.
\end{prb}

The next problem is inspired by a similar well-known property of Banach spaces.

\begin{prb}{2}
Is it true that the topological weight of \((G'',\cdot,p'')\) is not less than
the topological weight of \((G',\cdot,p')\) for an arbitrary \NORM{} \(p\) on
an Abelian group \(G\)?
\end{prb}

Taking into account \THM{main-refl}, the following question naturally arises.

\begin{prb}{3}
Is it true that if \(\PpP\) and \(\QqQ\) are two reflexive metric structures on
an Abelian group \(G\) that induce the same topology on \(G\), then
the collections \(\PpP'\) and \(\QqQ'\) induce the same topology on \(G'\)?
\end{prb}

A special case of the above problem is formulated below.

\begin{prb}{4}
Is it true that if \(\PpP\) is a reflexive metric structure on a LCA group
\(G\), then the collection \(\PpP'\) induces the Pontryagin's topology of
\(G'\)?
\end{prb}

\end{document}